\documentclass[reqno,12pt]{amsart}

\usepackage[utf8]{inputenc}

%\usepackage[left,pagewise,displaymath,mathlines]{lineno}
%\linenumbers

\usepackage{enumerate}
\usepackage[margin=1in]{geometry}
\usepackage{ifpdf}
\usepackage{amsmath}
\usepackage{amsfonts}
\usepackage{amssymb}
\usepackage{amsthm}
\usepackage[ocgcolorlinks,hyperfootnotes=false,colorlinks=true,citecolor=blue,linkcolor=blue,urlcolor=blue]{hyperref}
\usepackage{setspace}
\usepackage{amsrefs}
\usepackage{nicefrac}
\usepackage{graphicx}
\usepackage{color}
\usepackage{mathtools}
%\usepackage{biblatex}
%\addbibresource{partitions.bib}

%Palatino
%\usepackage[theoremfont]{newpxtext}
%\usepackage[vvarbb]{newpxmath}
%\linespread{1.05}
%\usepackage[scr=boondoxo]{mathalfa} % but we want the nice fancy script fonts

\usepackage[T1]{fontenc}

%% Show keys for labels in the PDF
%\newcommand{\showkeyslabelformat}[1]{%
% {\parbox{0.7in}{\normalfont\fontsize{6}{7}\selectfont\ttfamily#1}}}
%\usepackage[notref,notcite]{showkeys}
%%\usepackage[margin=1.5in]{geometry}
%%\definecolor{refkey}{rgb}{0.2,0.2,1}
%%\definecolor{labelkey}{rgb}{0.2,0.2,1}

% useful
\newcommand{\ignore}[1]{}

% analysis/geometry stuff

% reals

% sets (some)

% consistent

% Topo stuff

%extra thingies

\newtheorem{thm}{Theorem}[section]

\newtheorem{cor}[thm]{Corollary}

\newtheorem{lemma}[thm]{Lemma}

\newtheorem{question}[thm]{Question}

\newtheorem*{theorem*}{\bf Theorem}
\newtheorem{theoremA}{\bf Theorem}

\newtheorem{corollaryB}{\bf Corollary}

\newtheorem{corollaryC}{\bf Corollary}

\theoremstyle{definition}

\newtheorem{example}[thm]{Example}

\theoremstyle{remark}
\newtheorem{remark}[thm]{Remark}

%To Be Fixed - FIXME

%Changes/Remarks in Color

\author{Achinta Kumar Nandi}
%\thanks{The author was in part supported by Simons Foundation collaboration grant 710294.}
\address{Department of Mathematics, Oklahoma State University,
Stillwater, OK 74078, USA}
\email{acnandi@okstate.edu}

\date{\today}

%FIXME:
%\ifpdf
%\hypersetup{
  %pdftitle={Title},
  %pdfauthor={Authors},
  %pdfsubject={Several Complex Variables},
  %pdfkeywords={keywords},
%}
%\fi

\title{On perturbations of singular complex analytic curves}

\keywords{complex analytic varieties, holomorphic multifunctions, Puiseux parametrization, singularity theory}
%\subjclass[2020]{32C07 (Primary), 32B20, 14P15 (Secondary)}

\begin{document}
%\doublespace

\begin{abstract}
Suppose $V$ is a singular complex analytic curve inside $\mathbb{C}^{2}$. We investigate when a singular or non-singular complex analytic curve $W$ inside $\mathbb{C}^{2}$ with sufficiently small Hausdorff distance $d_{H}(V, W)$ from $V$ must intersect $V$. We obtain a sufficient condition on $W$ which when satisfied gives an affirmative answer to our question. More precisely, we show the intersection is non-empty for any such $W$ that admits at most one non-normal crossing type discriminant point associated with some proper projection. As an application, we prove a special case of the higher-dimensional analog, and also a holomorphic multifunction analog of a result by Lyubich-Peters \cite{Lyubich-Peters14}.  
\end{abstract}

\maketitle

%\enlargethispage{\baselineskip}

%%%%%%%%%%%%%%%%%%%%%%%%%%%%%%%%%%%%%%%%%%%%%%%%%%%%%%%%%%%%%%%%%%%%%%%%%%%%

\section{Introduction} \label{section:intro}

In this note, we study the local topology of a singular complex analytic curve (also referred to as a singular one-dimensional complex analytic subvariety) near its singular points. Let $V$ be a singular one-dimensional complex analytic subvariety of the unit ball $B_2$ in $\mathbb{C}^{2}$, containing the origin as a singular point. We seek to classify all one-dimensional subvarieties $W$ of $B_2$ that must intersect $V$ whenever their distance from $V$ is sufficiently small near a singular point. So we look for conditions on the local complex analytic and geometric properties that control the local topology of these $W$.

An earlier work by Lyubich and Peters \cite{Lyubich-Peters14} solves an important case of the stated problem. They prove that all $W$ that can be locally expressed as a sufficiently small perturbation of a local parametrization of $V$ near a singular point, must intersect $V$. In other words, they show that sufficiently small perturbations of a singular holomorphic disk can not escape the intersection. Lyubich and Peters are led to this discovery in order to classify the non-recurrent Fatou components of moderately dissipative non-elementary polynomial automorphisms $f: \mathbb{C}^{2} \rightarrow \mathbb{C}^{2}$. For an invariant non-recurrent Fatou component $\Omega \subset \mathbb{C}^{2}$ of $f$, the image of a limit map $h: \Omega \rightarrow \overline{\Omega}$ is either a single point or a holomorphic curve. The mentioned result then establishes the smoothness of the limit holomorphic curve.

Another motivation for the main question of this article is due to the previous works of Giraldo-Roeder \cite{Giraldo-Roeder19}, Lebl \cite{Lebl08}, Denkowski \cite{Denkowski16}, and Ebenfelt-Rothschild \cite{Ebenfelt07} on the structure of non-degenerate mappings. Historically the geometry of complex analytic manifolds and varieties under finite mappings is widely studied (see, for example, the books by Rudin \cite{Rudin}, Remmert \cite{Rem57}, Gunning-Rossi \cite{Gunning65}, Griffiths-Harris \cite{GHarris94}, Whitney \cite{Whitney72}, etc). In \cite{Ebenfelt07}, Ebenfelt and Rothschild observe that the image of a germ of a dimension $1$ complex submanifold $X$ of $\mathbb{C}^{n}$ under a finite germ $f: (\mathbb{C}^{n}, 0) \rightarrow (\mathbb{C}^{n}, 0)$ is again a dimension $1$ complex submanifold and the germ of $f$ is transversal to $f(X)$ if and only if the pre-image of $f(X)$ under $f$ is $X$. Their result holds in general for complex submanifolds of higher dimension but they require the germ of the complex submanifold $X$ to be not entirely contained inside the Jacobian variety $\{ \frac{\partial f}{\partial z} = 0\}$ of $f$. Lebl and Denkowski then provided different proofs of the smoothness property, and Giraldo-Roeder settled the codimension $1$ case of Ebenfelt-Rothschild's result without the hypothesis that $X$ not be contained in the Jacobian variety. Jelonek \cite{jelonek23}, in his recent work, has observed that the \emph{Milnor} \emph{numbers} of hypervarieties with isolated singularities can not decrease when pulled back under a finite holomorphic map, and has given a short and swift proof of the main theorem by Giraldo-Roeder. An exciting recent preprint \cite{Nowak} of Nowak establishes the smoothness results of Ebenfelt-Rothschild in all dimensions without the requirement that the germ of the complex submanifold $X$ is not contained inside the Jacobian variety $\{\frac{\partial f}{\partial z} = 0\}$ of $f$. A higher dimensional analog of Lyubich-Peters's result will provide a stronger formulation of Giraldo-Roeder's main theorem.  

The Puiseux parametrization theorem (see \cite{Chirka}, or \cite{Lebl20}) ensures that any one-dimensional variety $V$ is locally expressible as a holomorphic image of a disk. But since a neighborhood $U_p$ of $V$ that captures the local behavior of $V$ near a singular point $p$, is, in general, too big to be an appropriate choice for an arbitrary nearby $W$, the result by Lyubich and Peters does not provide a complete answer to our question.

In this article, we prove the following sufficient condition, partially answering our question:
\begin{theoremA}
Let $V$ be a singular one-dimensional subvariety of some open neighborhood $\Omega_2 \subset \mathbb{C}^{2}$ of the origin and $0 \in V^{\times}$. Moreover suppose the polydisk $D \times \Delta'^{1} \subset \subset \Omega_2$ is a \emph{good} neighborhood of $V$ around $0$. Then there exists $\epsilon > 0$ (depending on the good neighborhood) such that for all one-dimensional subvariety $W$ of $\Omega_2$ with:
\begin{itemize}
    \item $W$ has at most one non-normal crossing type discriminant point associated with the projection onto $D$,
    \item $d_{H} (V, W) < \epsilon$,
\end{itemize}
the intersection $V \cap W$ is non-empty.
\end{theoremA}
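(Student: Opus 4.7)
The plan is to reduce Theorem~A to a local Puiseux-parametric perturbation comparison near $0$, which is then handled by the Lyubich--Peters result \cite{Lyubich-Peters14}. The reduction uses the projection structure of the good neighborhood to show that $W$ must carry its (unique) non-normal-crossing discriminant point $p_W$ close to $0$, after which one matches Puiseux parametrizations of $V$ at $0$ with those of $W$ at $p_W$.

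\emph{Projection stability and localization of $p_W$.} By the good-neighborhood hypothesis, the projection $\pi = \pi_D: V \cap (D \times \Delta'^{1}) \to D$ is a proper branched covering of some degree $k\ge 2$ (the inequality $k\ge 2$ holds because $V$ is singular at $0$). A standard stability argument (properness plus the positive distance from $V$ to $D\times\partial\Delta'^{1}$) shows that, for $\epsilon$ sufficiently small, $\pi|_W: W\to D$ is also a proper branched covering of the same degree $k$ with $W$ disjoint from $D\times\partial\Delta'^{1}$. At every $z_0\in D$ outside the discriminant of $V$, $V$ splits locally into $k$ disjoint smooth graphs; Hausdorff closeness forces $W$ to have the same structure there, so no non-normal-crossing of $W$ lies above such $z_0$. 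Hence the non-normal-crossing point $p_W$ of $W$, if present, must lie close to the discriminant of $V$---and in fact close to $0$ specifically, since the Puiseux-branched structure of $V$ at $0$ is incompatible with $W$ having no non-normal-crossing over $D$ (such a $W$ would split globally into single-valued holomorphic graphs over $D$, unable to Hausdorff-approximate the multi-valued branches of $V$).

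\emph{Puiseux comparison and Lyubich--Peters.} By the Puiseux parametrization theorem (\cite{Chirka}, \cite{Lebl20}), $V$ near $0$ decomposes as $\bigcup_\ell \phi_\ell(\Delta)$ and $W$ near $p_W$ as $\bigcup_{\ell'}\psi_{\ell'}(\Delta)$. Using Hausdorff closeness together with uniqueness of Puiseux parametrizations---and the hypothesis of at most one non-normal-crossing for $W$, which makes the branch-matching unambiguous---one pairs each $\psi_{\ell'}$ (after a suitable change of parameter) with a $\phi_\ell$ and establishes $\psi_{\ell'} = \phi_\ell + (\text{small})$ uniformly on $\Delta$. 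Lyubich--Peters's theorem applied to such a pair produces the desired point of $V \cap W$ near $0$. The case where $V$'s singularity at $0$ is purely a normal-crossing (several smooth branches meeting transversally) can be handled within the same framework by an argument-principle computation applied to the resultant $R(z) := \prod_{i,j}\bigl(v_i(z) - w_j(z)\bigr)$ on $D$; the cross-branch transversality then forces $R$ to have a zero in $D$.

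\emph{Main obstacle.} The principal difficulty is the Puiseux comparison: quantitatively converting the global Hausdorff bound into a small parametric perturbation of Puiseux expansions in a form suitable for invoking Lyubich--Peters. This involves careful tracking of Puiseux exponents, leading coefficients, and reparametrizations under perturbation of the image curve, and a compactness/uniqueness argument ruling out ``exotic'' reparametrizations of $W$. The hypothesis on $W$'s non-normal-crossings is precisely what makes the branch-matching unambiguous; without it, $W$'s local structure could reorganize in ways incompatible with $V$ and spoil the comparison.
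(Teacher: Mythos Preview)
Your overall strategy---localize the non-normal-crossing point of $W$ near $0$, then compare Puiseux parametrizations and invoke Lyubich--Peters---matches the paper's. But there is a genuine gap at the ``projection stability'' step, and it is precisely the subtlety the paper isolates as the main difficulty.

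You assert that a ``standard stability argument'' forces $\pi|_W : W \to D$ to be a branched cover of the \emph{same degree} $k$ as $\pi|_V$. This is false under a mere Hausdorff bound. Hausdorff closeness only says that over a regular value $z_0$ the fiber $W \cap (\{z_0\}\times\Delta'^1)$ lies in small disjoint balls around the $k$ points of $V \cap (\{z_0\}\times\Delta'^1)$; it does not prevent $W$ from having several sheets inside each such ball. Concretely, if $V$ is the cusp parametrized by $s \mapsto (s^2, s^3)$ (two sheets over the $z$-axis), then for generic small $h$ the curve $W$ parametrized by $t \mapsto (t^4, t^6 + \epsilon\, h(t))$ is Hausdorff-close to $V$ but has four sheets over the $z$-axis. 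Your subsequent ``pair each $\psi_{\ell'}$ with a $\phi_\ell$'' step therefore has no basis: the Puiseux exponent of $W$ can be strictly larger than that of $V$, and there is no one-to-one branch matching.

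The paper handles this in two moves you are missing. First, Lemma~\ref{no} shows that for $d_H(V,W)$ small the sheet number of (an irreducible component of) $W$ is an integer \emph{multiple} $m_W \ell$ of the sheet number $\ell$ of $V$; this comes from factoring $\pi_W$ locally as $\pi_V \circ f$ and recognizing $f$ as a covering map. Second---and this is the point stressed at the opening of Section~\ref{epsilon}---the $\epsilon_1$ in the Lyubich--Peters theorem depends only on the \emph{image} $\psi_V(D_1)$, not on the particular parametrization. This lets one replace the one-to-one Puiseux map $\psi_V(z) = (z^\ell, g(z))$ by the $m_W$-to-one map $\tilde\psi_V(z) = (z^{\ell m_W}, g(z^{m_W}))$ without changing $\epsilon_1$, so that $\tilde\psi_V$ and the Puiseux map of $W$ now have the same exponent and can be compared directly. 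Your proposal needs both of these ingredients; without them the ``Puiseux comparison'' paragraph does not go through.
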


The singular set of $V$ is denoted using $V^{\times}$. Following Whitney \cite{Whitney72}, we define the notion of a \emph{good} \emph{neighborhood} in Section $2.1$.

The requirement of admitting at most one non-normal crossing type discriminant point essentially makes $W$ a holomorphic disk inside $D \times \Delta^{'1}$. A version of the higher dimensional analog of the Lyubich-Peters result is proved as an application. To describe the higher dimensional setting, we start with a finite holomorphic mapping $f: \Delta^{n} \rightarrow \mathbb{C}^{n+1}$ with $f(0)= 0$ as a singular point of $f(\Delta^{n})$, and a neighborhood $D^{n} \times \Delta^{'1}$ good for $f(\Delta^n)$ around $0$. Suppose $\pi_{n+1}:~f(\Delta^{n}) \cap (D^{n} \times \Delta^{'1}) \rightarrow D^{n}$ is the proper projection associated with the good neighborhood, $D_f$ is the discriminant variety associated with $\pi_{n+1}$, and $\pi_{n+1}(0)$ is a smooth point of $D_f$. Now $D^{n} \times \Delta^{'1}$ serves as a good neighborhood for any holomorphic image $g(\Delta^n)$ whenever the mapping $g:\Delta^n \rightarrow \mathbb{C}^{n+1}$ is sufficiently close to $f$, and thus the discriminant variety $D_g$ associated with the good neighborhood $D^{n} \times \Delta^{'1}$ can be obtained for each such finite holomorphic $g$. Further, notice that, after possibly a small rotation, a small enough polydisk $D^{(n-1)}_1 \times D_2$ \emph{good} for $D_f$ around $\pi_{n+1}(0)$ can be chosen so that $D_f \cap (D^{(n-1)}_1 \times D_2)$ is a connected complex submanifold. We obtain the following special case of the higher dimensional analog:

\begin{corollaryB}
    Let $f$, $D_f$ and $D^{(n-1)}_1 \times D_2$ are as above. Then there exists $\epsilon > 0$ such that for all finite holomorphic $g: \Delta^{n} \rightarrow \mathbb{C}^{n+1}$ with $D_g \cap (D^{(n-1)}_1 \times D_2)$ being a connected complex submanifold and $\| f -g \| < \epsilon$, the intersection $f(\Delta^{n}) \cap g(\Delta^{n})$ is non-empty.
\end{corollaryB}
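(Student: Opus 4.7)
The plan is to reduce Corollary B to Theorem A by slicing with a distinguished complex $2$-plane through the origin of $\mathbb{C}^{n+1}$. Let $\Pi := \{0\} \times D_2 \times \Delta'^{1}$ denote the bidisk inside $D_1^{(n-1)} \times D_2 \times \Delta'^{1}$ obtained by fixing the first $n-1$ coordinates to zero, and set $V := f(\Delta^{n}) \cap \Pi$ and $W := g(\Delta^{n}) \cap \Pi$; both are one-dimensional subvarieties of $\Pi \cong D_2 \times \Delta'^{1}$. The strategy is to verify that the pair $(V, W)$ satisfies the hypotheses of Theorem A in the $2$-dimensional ambient space $\Pi$, apply Theorem A to obtain $V \cap W \neq \emptyset$, and then conclude $f(\Delta^{n}) \cap g(\Delta^{n}) \supseteq V \cap W \neq \emptyset$.

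First, I would carry out a small-perturbation setup: for $\|f-g\|$ sufficiently small, $g(\Delta^{n}) \subset D^{n} \times \Delta'^{1}$, the restriction of $\pi_{n+1}$ to $g(\Delta^{n})$ is proper, and $D_g$ is Hausdorff-close to $D_f$. Since $\pi_{n+1}(0)$ is a smooth point of $D_f$ and $D_1^{(n-1)} \times D_2$ can be taken small, $D_f \cap (D_1^{(n-1)} \times D_2)$ is a single-sheeted graph over $D_1^{(n-1)}$; using the proximity to $D_f$ together with the connected-submanifold hypothesis on $D_g$, the same is true of $D_g \cap (D_1^{(n-1)} \times D_2)$. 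In particular, the slice $D_g \cap (\{0\} \times D_2)$ is a unique point $p_g$ close to the origin.

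Next, I would verify the Theorem A hypotheses for $(V, W)$ in $\Pi$. The origin is a singular point of $V$: since $0 \in f(\Delta^{n})^{\times}$, the projection $\pi_{n+1}|_{f(\Delta^{n})}$ has local degree at least $2$ at $0$, and because $D_f$ meets $\{0\} \times D_2$ transversally at the single point $0$, this non-trivial branching persists on the slice $V$ under the projection $\Pi \to \{0\} \times D_2$, forcing $0 \in V^{\times}$. The bidisk $D_2 \times \Delta'^{1}$ serves as a good neighborhood of $V$ around $0$, inherited from that of $f(\Delta^{n})$. Meanwhile, $W$ has exactly one discriminant point $p_g$ under the projection $\Pi \to \{0\} \times D_2$, hence trivially at most one non-normal crossing type discriminant point. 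Finally, I would establish $d_H(V, W) \to 0$ as $\|f-g\| \to 0$ using the branched-cover structure of $f(\Delta^{n})$ and $g(\Delta^{n})$ over $D^{n}$ together with the smoothness of $D_f$ and $D_g$.

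I expect the main obstacle to be this last Hausdorff-closeness step: sup-norm closeness of $f$ and $g$ does not automatically pass to the $2$-dimensional slices, since even Hausdorff-close subvarieties can in general have far-apart intersections with a given plane. The argument must exploit the branched-cover structure of $f(\Delta^{n})$ and $g(\Delta^{n})$ over $D^{n}$, together with the fact that $D_f$ and $D_g$ are smooth single-sheeted graphs over $D_1^{(n-1)}$, in order to trace the perturbation through the local Puiseux parametrizations of $V$ and $W$ along $D_2$. Once this Hausdorff bound is in hand, Theorem A applies in $\Pi$ and produces a point of $V \cap W \subseteq f(\Delta^{n}) \cap g(\Delta^{n})$, completing the proof of Corollary B.
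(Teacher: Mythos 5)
Your proposal follows essentially the same route as the paper: after noting that $\|f-g\|<\epsilon$ controls $d_H(f(\Delta^n),g(\Delta^n))$, the paper's Corollary 4.3 likewise reduces to Theorem A by slicing with a complex $2$-plane through $\pi_{n+1}(0)$ transverse to the single-sheeted graphical discriminant, using the connected-submanifold hypothesis on $D_g$ to see that the slice of $g(\Delta^n)$ has exactly one discriminant point --- the only cosmetic difference being that the paper first rotates $D_2$ to a ``good direction'' $T_2$ and chooses $\alpha\in U_{n-1}$ rather than using the coordinate plane $\{0\}\times D_2$ directly. The Hausdorff-closeness of the slices, which you rightly single out as the delicate step, is left implicit in the paper's argument as well.
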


A version of Lyubich-Peters's result for holomorphic multifunctions is immediate from Theorem \ref{main}:

\begin{corollaryC}
Suppose $F: D \rightarrow (\mathbb{C}^{2})^{m}_{sym}$ is a holomorphic multifunction, the image $\{F(D) \}$ is an irreducible subvariety of some open subset of $\mathbb{C}^{2}$, and $0 \in \{F(D)\}$ is a singular point of the image of the multifunction. Moreover suppose the polydisk $D_1 \times {\Delta}'^{2} \subset \mathbb{C} \times \mathbb{C}'^{2}$ is a good neighborhood of the multigraph of $F$ about $0$. Then there exists $\epsilon > 0$ such that for every holomorphic multifunction $G: D \rightarrow (\mathbb{C}^{2})^{m}_{sym}$ satisfying:
\begin{itemize}
    \item the multi-graph of $G$ has at most one non-normal crossing type discriminant point associated with the projection onto $D_1$,
    \item $\| F - G\|_{sym} < \epsilon$,
\end{itemize}
we have that $\{F(D)\} \cap \{G(D)\} \neq \emptyset$.
\end{corollaryC}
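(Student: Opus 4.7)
The plan is to apply Theorem A to the multigraphs of $F$ and $G$ themselves. Set
\[
\tilde V_F := \{(z,w) \in D_1 \times \Delta'^{2} : w \in F(z)\}, \qquad \tilde V_G := \{(z,w) \in D_1 \times \Delta'^{2} : w \in G(z)\};
\]
both are one-dimensional analytic subvarieties of the polydisk $D_1 \times \Delta'^{2} \subset \mathbb{C} \times \mathbb{C}'^{2}$ equipped with the proper finite projection onto $D_1$. Although Theorem A is stated for plane curves, its hypotheses and its proof involve only the triple (one-dimensional curve, base disk, proper finite projection) together with the associated good neighborhood and discriminant locus, all of which are intrinsic to that triple and insensitive to the codimension of the ambient polydisk. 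Theorem A therefore extends to the pair $(\tilde V_F,\tilde V_G)$ inside $D_1 \times \Delta'^{2}$, and this extension is what I will use.

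Under this extension the translation of the bulleted hypotheses is immediate. The first bullet is precisely the Theorem A hypothesis placed on $\tilde V_G$. For the second, the definition of $\|\cdot\|_{sym}$ yields: for each $z \in D_1$ and each $w \in F(z)$ there is $w' \in G(z)$ with $|w-w'| < \|F-G\|_{sym}$, so $(z,w') \in \tilde V_G$ lies within $\|F-G\|_{sym}$ of $(z,w) \in \tilde V_F$; the symmetric argument gives $d_H(\tilde V_F,\tilde V_G) \leq \|F-G\|_{sym}$. Taking $\epsilon$ to be the constant furnished by Theorem A applied to $\tilde V_F$ in its good neighborhood, the hypothesis $\|F-G\|_{sym} < \epsilon$ together with the discriminant bullet forces $\tilde V_F \cap \tilde V_G \neq \emptyset$. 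Any intersection point $(z_0,w_0)$ has $w_0 \in F(z_0) \cap G(z_0) \subset \{F(D)\} \cap \{G(D)\}$, which is the desired statement.

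The main technical obstacle is justifying that Theorem A carries over to one-dimensional subvarieties of $\mathbb{C} \times \mathbb{C}'^{2}$: each ingredient in its proof -- the Puiseux parametrization of the curve near the distinguished point, the Weierstrass-type analysis along the proper projection onto $D_1$, and the winding-number comparison at discriminant points -- needs to be verified to remain valid when the curve sits in codimension two rather than codimension one. A subsidiary point is the singularity requirement on $\tilde V_F$ at the origin; in the exceptional case where $\tilde V_F$ is smooth at $0$ while $\{F(D)\}$ is singular there (which already occurs for $m=1$ with $F$ parametrizing, e.g., a cusp), one instead applies Theorem A directly to $\{F(D)\} \subset \mathbb{C}^{2}$, which satisfies the singularity hypothesis by assumption and whose discriminant condition is inherited from the discriminant bullet on the multigraph of $G$. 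Either case yields the conclusion.
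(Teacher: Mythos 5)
The central step of your proposal---transplanting Theorem A to the multigraphs $\tilde V_F,\tilde V_G$ viewed as curves inside $D_1\times\Delta'^{2}\subset\mathbb{C}\times\mathbb{C}^{'2}$---is not an unverified technicality, as you frame it, but is actually false. Theorem A and the Lyubich--Peters result on which its proof rests are intersection statements for two curves in $\mathbb{C}^{2}$, where the dimensions are complementary ($1+1=2$); two curves in a three-dimensional ambient space have no reason to meet, and arbitrarily small perturbations can be disjoint. Concretely, take $m=2$ and let $F,G\colon D\to(\mathbb{C}^{2})^{2}_{sym}$ be the holomorphic multifunctions with multigraphs $\tilde V_F=\{(z,u,v):u=z,\ v^{2}=z^{3}\}$ and $\tilde V_G=\{(z,u,v):u=z+\delta,\ v^{2}=z^{3}\}$. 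Then $\tilde V_F$ is singular at $0$, $\tilde V_G$ has exactly one (non-normal crossing) discriminant point over $D_1$, and $\|F-G\|_{sym}=\delta$ is arbitrarily small, yet $\tilde V_F\cap\tilde V_G=\emptyset$ for every $\delta\neq 0$ because the $u$-coordinates disagree over every $z$. Meanwhile the images $\{v^{2}=u^{3}\}$ and $\{v^{2}=(u-\delta)^{3}\}$ do intersect near the origin, so the corollary is true but your route to it proves a false intermediate statement. The fallback you offer for the case where $\tilde V_F$ is smooth does not rescue this: for $m\geq 2$ the multigraph is typically singular at $0$ and you would still be intersecting two curves in $\mathbb{C}^{3}$.

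Your observation that $\|F-G\|_{sym}$ controls a Hausdorff distance is the right opening move, but it must be applied to the images, not the multigraphs. That is what the paper does: it notes that $d_{sym}(F,G)<\epsilon$ forces $d_H(\{F(D)\},\{G(D)\})<\epsilon$ for the image curves in $\mathbb{C}^{2}$, observes that both images have the same maximal number $m$ of branches (so the sheet-multiplication issue of Theorem A disappears), and then runs the Puiseux technique of Theorem A to produce close parametrizations of $\{F(D)\}$ and $\{G(D)\}$, to which the Lyubich--Peters result applies in $\mathbb{C}^{2}$. The multigraph, its good neighborhood, and the discriminant hypothesis on $G$ enter only as the scaffolding for constructing those parametrizations; the intersection itself is always taken in $\mathbb{C}^{2}$.
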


See Section $2.2$ for the definition of a holomorphic multifunction, and the definition of an image $\{F(D)\}$ of a holomorphic multifunction $F$.

\subsection{Notational Conventions}
We use the terminology and notational conventions of Whitney \cite{Whitney72}. We express a set $H$ as $H^{r} \times H^{'(n-r)}$ to indicate that $H$ is a collection of $n$-tuples, $H^{r}$ is the collection of the first $r$-tuples, and $H^{'(n-r)}$ is the collection of the last $(n-r)$-tuples. We denote the closure of a set $V$ by $\overline{V}$ and the boundary of a set $V$ by $\partial V$. We denote an element $Q$ of $X^{m}/ S_m$ or $X^{m}_{sym}$ by $Q = \langle q_1, \dots, q_m \rangle$. The entries of $Q$ are unordered and they are not necessarily distinct.

\subsection{Acknowledgements}
The author would like to acknowledge Ji{\v r}{\' i} Lebl for introducing the result by Lyubich-Peters, and for his kind and patient guidance throughout. The author is grateful to Roland Roeder for carefully reading the manuscript and suggesting necessary corrections and for many long mathematical discussions. His inputs have greatly helped in finding cleaner proofs of many results in this article. The author is thankful to Anand Patel, Sean Curry, Abdullah Al Helal, and Anthony Kable for helpful discussions on various results of this article.

\section{Background}

All varieties in this article are complex analytic varieties. A detailed treatment of the following objects can be found in the expositions by Whitney \cite{Whitney72} and Chirka \cite{Chirka}.

\subsection{Good Neighborhoods, Proper Projections, and their Discriminant Sets}
We call a subset $V$ of $\mathbb{C}^{n}$ \emph{analytic} near a point $p \in \mathbb{C}^{n}$ or, equivalently, a \emph{variety} near $p$, if there exists an open neighborhood $U_p \subset \mathbb{C}^{n}$, and a set of holomorphic functions $\{ f_j \in \mathcal{O}(U_p): j \in \{1, \dots, n\} \}$ such that $V \cap U_p$ is the common zero set of the holomorphic functions $\{f_1, \dots, f_n\}$. A subset $V$ of $\mathbb{C}^{n}$ is called a \emph{local} \emph{variety} if it is \emph{analytic} near all $p \in V$. 

Suppose $V \subset \mathbb{C}^{n}$ is a local variety, and $p \in V$. An open neighborhood $H = H^{r} \times {H'}^{(n-r)} \subset \mathbb{C}^{r} \times \mathbb{C'}^{(n-r)}$ of $p$ (where $H'^{(n-r)}$ is bounded) is called \emph{good} for $V$ if:
\begin{itemize}
    \item $V \cap (H^{r} \times \partial {H'}^{(n-r)}) = \emptyset,$
    \item $V$ is analytic in $H^{r} \times \overline{{H'}^{(n-r)}}$.
\end{itemize}

Assume $V$ be a pure $m$-dimensional local variety, $p \in V$, and $H = H^{r} \times H'^{(n-r)}$ is a good neighborhood for $V$ around $p$. Suppose $r \geq m$. The projection $\pi_{r}: V \cap H \rightarrow H^{r}$ is a proper and finite holomorphic mapping with its image being an $m$-dimensional subvariety of $H^r$. Sometimes these proper projections are also called good projections.

Any point $q \in H^{r}$ in the image of the projection map $\pi_{r}$ has finitely many pre-images. The set of points in $\pi_{r}(V \cap H)$ where this number of pre-images $\{\pi^{-1}_{r} (q)\}$ is maximal constitutes a Zariski open set inside $\pi_{r}(V \cap H)$. The complement of this set inside $\pi_r (V \cap H)$ is a subvariety of codimension at least one, referred to as the \emph{discriminant} \emph{set} associated with the proper projection $\pi_r$.

If we set $m = r$ in the above discussion, then the image of the proper projection $\pi_r$ is all of $H^{r}$, and the discriminant set is a codimension one subvariety inside $H^{r}$.

A \emph{normal} \emph{crossing} \emph{type} discriminant point $q \in H^r$ is a discriminant point for $V$ such that for a neighborhood $N$ of $q$, the pre-image $\pi_r^{-1}(N)$ is finitely many graphs over $N$ of holomorphic mappings which intersect at some point of $\pi_r^{-1}(q)$, that is, $\pi_r^{-1}(N)$ is composed of finitely many smooth sheets that intersect at some point of $\pi_r^{-1}(q)$. The discriminant points that are not normal crossing type we will simply refer to as \emph{non-normal} \emph{crossing} \emph{type} discriminant points.

\subsection{Holomorphic Multifunctions}

Suppose $X$ is a non-empty set. We denote the $m$-fold symmetric product of $X$ by $X^{m}_{sym}$, defined as $X^{m} / S_m$.

Assume $V \subset \mathbb{C}^{n}$ is a local variety of pure dimension $r$, $p \in V$, and $H = H^{r} \times H^{'(n-r)}$ is a neighborhood that is good for $V$ around $p$. For our purposes, the inverse of the proper projection map $\pi_{r}: V \cap H \rightarrow H^{r}$ motivates the idea of a holomorphic multifunction.

Suppose $q \in H^{r}$, and maximal number of points in a fiber $\pi^{-1}_{r}(q) \subset V \cap H$ is $k$. We can define $\pi^{-1}_{r}: H^{r} \rightarrow (V \cap H)^{k}_{sym}$ as $$ q \mapsto \langle q_1, \dots, q_k \rangle, $$ where $\pi^{-1}_{r}(q) = \{q_1, \dots, q_k\}$, and the $q_j$s are not necessarily distinct. To define the multifunction, we find the holomorphic functions $f_j$ that define $V$ inside $H$ and generate the ideal of $V$ at $p$ and solve the system $f_j(q, \cdot) = 0$, where $q \in H^{r}$. The multifunction $\pi^{-1}_{r} (q) \in (V \cap H)^{k}_{sym}$ is then given by the unordered set $\langle (q, q'_1), \dots, (q, q'_m) \rangle$ counted with multiplicity, where the points $q'_r$ are solutions of the system $f_j(q, \cdot) =0$. As there is no natural way to order these pre-image points, we shall count them as points inside the space of unordered $k$-tuples. 

In general, a multifunction is a map $F: X \rightarrow Y^{m}_{sym}$ from a topological space $X$ to the $m$-fold symmetric product of a topological space $Y$. If $Y$ is Hausdorff then its $m$-fold symmetric product space is also Hausdorff. If $Y$ is a metric space, the distance function $d_Y$ can be used to equip $Y^{m}_{sym}$ with a natural metric space structure. In a similar manner, using the complex structure on $Y$, the symmetric product space is naturally endowed with the structure of a singular complex analytic variety.

A multifunction $F: X \rightarrow Y^{m}_{sym}$ from a complex manifold $X$ to a symmetric product of another complex manifold $Y$ is holomorphic if all possible symmetric polynomials of the elements of $\{F(x)\}$ are holomorphic. The \emph{multigraph} associated with a holomorphic multifunction $F$ is a subset of $X \times Y$ defined as $\{ (x, y) \in X \times Y: \text{y is an entry of the tuple F(x)} \}$. So the inverses of all proper projection mappings of analytic varieties are always holomorphic multifunctions, and the varieties are just the multigraphs of these multifunctions.

The set of points in the image of a holomorphic multifunction $F: D \rightarrow (\mathbb{C}^{2})^{m}_{sym}$ inside $\mathbb{C}^{2}$ is denoted by $\{F(D)\}$ and is defined as $$ \{F(D)\}:= \{f_j (z): F(z) = \langle f_1 (z), \dots, f_m (z) \rangle \in (\mathbb{C}^{2})^{m}_{sym}, z \in D \}.$$

The number of distinct elements in the image set $\{F(x)\}$ of a holomorphic multifunction $F$ is not necessarily constant. The \emph{discriminant} \emph{set} of $F$ is the collection of all points $x$ such that the cardinality of $\{F(x)\}$ is smaller than the maximum.  

\subsection{Hausdorff Distance}

The Hausdorff distance between two non-empty sets $V$ and $W$ is defined as: $d_{H}(V, W):= \inf \{ \epsilon > 0: V \subset W_{\epsilon}, W \subset V_{\epsilon} \}$, where $V_{\epsilon}$ is the union of $\epsilon$-balls centered at points of $V$.

\subsection{The Puiseux Parametrization Theorem}\label{para}

Any irreducible one-dimensional subvariety of an open subset of $\mathbb{C}^{n}$ can be locally written as an image of a holomorphic mapping of the form $(t^{m}, f_2(t), \dots, f_{n}(t))$ from the unit disk in $\mathbb{C}$. This result is well-known as the \emph{Puiseux} \emph{parametrization} \emph{theorem}. We shall briefly discuss the idea of proving this here, and use the technique in the proof of Theorem \ref{main}. A detailed proof can be found in \cite{Chirka} or \cite{Lebl20}.

Suppose $V$ is an irreducible one-dimensional subvariety of the unit ball $B_n$ of $\mathbb{C}^{n}$ with $0 \in V$ as a singular point, and $D \times \Delta^{'(n-1)} \subset \mathbb{C} \times \mathbb{C}^{'(n-1)}$ as good neighborhood about $0$. This means the projection $\pi_1: V \cap (D \times \Delta^{'(n-1)}) \rightarrow D$ is proper. Further, suppose that $0$ is the only discriminant point of $\pi_1$ inside $D$.

To parametrize $V \cap (D \times \Delta^{'(n-1)})$, our strategy is to start off by defining a loop on $\pi^{-1}_1 (\partial D)$. Pick $z_0 \in \partial D$, and choose $w_0 \in \pi^{-1}_1(z_0)$. Starting from $z_0$ and traversing around $\partial D$ once corresponds to creating a curve inside $\pi^{-1}_1( \partial D)$ that joins $w_0$ with some other point in $\pi^{-1}_1(z_0)$. And if we continue to traverse around $\partial D$ with $z_0$ as a base point, we obtain newer curves inside $\pi^{-1}_1(\partial D)$ that join $w_0$ with all other points in $\pi^{-1}_1(z_0)$, and eventually we obtain a loop about $w_0$ due to the irreducibility of $V$. Suppose we need to traverse $m_V$-times around $\partial D$ with $z_0$ as a base point to define a loop about $w_0$.

Now we pre-compose this assignment from $\partial D$ to $\pi^{-1}_1( \partial D)$ with the function $z \mapsto z^{m_V}$ from the unit disk $D$ to itself, so that from $\partial D$ to $\pi^{-1}_1(\partial D)$ we have a well-defined map. Under this mapping, starting from $z_0 \in \partial D$ each arc that creates an angle of $\frac{2 \pi}{m_V}$ at the origin, is being mapped to a curve inside $\pi^{-1}_1 (\partial D)$ that joins only two distinct points of $\pi^{-1}_1 (z_0)$. To define the parametrization on $D \setminus \{0\}$, we analytically continue across each branch and we define it on $\{0\}$ using the \emph{Riemann} \emph{Extension} \emph{Theorem}. Thus we obtain a parametrization $f: D \rightarrow V \cap (D \times \Delta^{'(n-1)})$ of the form $z \mapsto (z^{m_V}, f_2(z), \dots, f_{n}(z))$.

\section{A Sufficient Condition}

In this section, we shall prove Theorem \ref{main} which is the central result of this article. Our strategy is to simultaneously parametrize $V$ and $W$ using Puiseux's technique, and then use the Lyubich-Peters result to prove that $V \cap W \neq \emptyset$. The subtlety in the proof originates mainly from two factors concerning the one-dimensional variety $W$. 
\begin{itemize}
    \item Given any one-dimensional complex analytic variety $W$ with sufficiently small Hausdorff distance $d_{H}(V, W)$ from $V$, the maximal number of distinct branches of $W$ can be \emph{arbitrarily} \emph{large} when the branches are considered locally away from the discriminant set.
    \item To be able to apply the Puiseux parametrization technique, we require that the discriminant set of $W$ be within a small neighborhood of the discriminant set of $V$.
\end{itemize}

In what follows, we make a few observations to overcome these two issues.

\subsection{Separation of branches away from the discriminant set}

Let $V$ be an $r$-dimensional variety and let $\pi^{r}$ be the projection from $V \cap (H^{r} \times \Delta^{'(n-r)})$ onto $H^{r} \subset \mathbb{C}^{r}$. The inverse $(\pi^{r})^{-1}$ can be thought of as a holomorphic multifunction. We show that, away from the discriminant set, locally the branches of $V$ are some definite distance apart. 

\begin{lemma}\label{sep}
 Suppose $V$ is an $r$-dimensional variety, $p \in V$, $H^{n} = H^{r} \times \Delta^{'(n-r)}$ is a neighborhood of $p$ that is good for $V$, and $D_V \subset H^{r}$ is the associated discriminant set. Further suppose $\Delta^{r} \subset H^{r}$ is a pre-compact neighborhood such that $p$ is in $\Delta^{r} \times \Delta^{'(n-r)}$. Then for each open neighborhood $D_1 \subset H^{r}$ of $D_V$, there exists $\epsilon > 0$ such that for all $q \in \Delta^{r} \setminus D_1$ the Hausdorff distance between the components of $(\pi_r)^{-1} ( \overline{B(q, \delta)})$ is greater than~$\epsilon$ whenever $\delta > 0$ is sufficiently small.
\end{lemma}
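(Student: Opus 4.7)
The plan is to exploit that the proper projection $\pi_r$ restricts to a $k$-sheeted unbranched covering over $H^r \setminus D_V$, where $k$ is the maximum fiber cardinality, and then combine this with a compactness argument on a set that stays a definite distance away from the discriminant set.

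First, I would define the minimum-gap function
\[
m(q) \;:=\; \min\bigl\{\|w-w'\|\,:\, w\neq w',\ w,w'\in \pi_r^{-1}(q)\bigr\}, \qquad q \in H^r \setminus D_V,
\]
(if $k=1$ the claim is trivial, so assume $k\geq 2$). Away from the discriminant set, every fiber has exactly $k$ distinct points and $\pi_r$ is a local biholomorphism at each of them, so the multivalued inverse $(\pi_r)^{-1}$ is locally given by $k$ pairwise distinct holomorphic sections. In particular $m$ is locally the minimum of finitely many continuous positive functions, hence $m$ is continuous and strictly positive on $H^r \setminus D_V$.

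Next, I would observe that $K := \overline{\Delta^r}\setminus D_1 = \overline{\Delta^r}\cap D_1^c$ is closed inside the compact set $\overline{\Delta^r}$ (precompactness of $\Delta^r$ is used here), hence is itself compact; and since $D_V \subset D_1$ we have $K \subset H^r \setminus D_V$. Therefore $m$ attains a positive minimum
\[
m_0 \;:=\; \min_{q\in K} m(q) \;>\; 0,
\]
and I set $\epsilon := m_0/3$.

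Finally, fix $q \in \Delta^r \setminus D_1 \subset K$ with fiber points $w_1(q),\dots,w_k(q)$, pairwise at distance at least $m_0$. Because $\pi_r$ is a local biholomorphism at each $w_j(q)$, there exists $\delta_q > 0$ such that for every $\delta \leq \delta_q$ the preimage $\pi_r^{-1}(\overline{B(q,\delta)})$ splits as a disjoint union of $k$ holomorphic graphs, the $j$-th of which is contained in $B(w_j(q),\epsilon)$. Any two distinct components then lie at metric distance at least $m_0 - 2\epsilon = \epsilon$, and since Hausdorff distance dominates metric distance between disjoint compact sets, their Hausdorff distance is at least $\epsilon$, which is the desired uniform bound.

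The main obstacle I expect is the clean justification that $m$ is continuous and attains a positive minimum on $K$. This hinges on the fact that over $H^r \setminus D_V$ the restriction of $\pi_r$ is a genuine covering map, which in turn relies on $H^r \times \Delta'^{(n-r)}$ being good for $V$; once this is in hand, the rest of the argument is essentially pointwise separation of sheets combined with compactness. A minor but worth-noting point is that $\delta$ need not be uniform in $q$ (the statement allows $\delta$ to depend on $q$), so no further uniform bound on $\delta_q$ is needed.
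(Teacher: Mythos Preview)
Your proposal is correct and follows essentially the same approach as the paper: both define the minimum fiber-gap function (your $m$, the paper's $\phi$), use compactness of $\overline{\Delta^r}\setminus D_1$ to extract a uniform positive lower bound, and then use the local sheet structure together with a triangle-inequality estimate to bound the distance between components. The only cosmetic difference is that the paper phrases the continuity of the gap function via the holomorphic multifunction $(\pi_r)^{-1}$ composed with a symmetric function, whereas you phrase it via local holomorphic sections of the covering; these are the same observation in different language.
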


 \begin{proof}

Let the maximum number points in the fiber $\{(\pi^{r})^{-1}(q)\}$ of $q \in H^{r}$ be $m$. The inverse of the proper projection,  $( \pi^{r} )^{-1} : H^{r} \longrightarrow (V \cap H^{n})^{m}_{sym}$ defined as $$q \mapsto \langle \alpha_1,\dots, \alpha_m \rangle$$ is a holomorphic multifunction, where $V \cap ( \{q \} \times \Delta^{'n-r} ) = \{ \alpha_i \}^{m}_{i=1}.$ We shall denote this multifunction by $F$ from now on.

For any open neighborhood $D_{1} \subset H^{r}$ of the discriminant set $D_V$, we have $$H^{r} \setminus D_{1} \subset \{ q \in H^{r}: | \{ (\pi^{r})^{-1}(q) \}| = m \} .$$ Let $\Delta^r$ be a smaller polydisk with $\overline{\Delta^r} \subset H^r$ as mentioned in the statement. For each $q~\in~\overline{\Delta^{r}}~\setminus~D_1$ the subset of $V \cap H^n$ corresponding to the image $F(q)=\langle f_1(q), \dots, f_m(q) \rangle$ is given by $ \{F(q) \} = \{ f_i(q) \}^{m}_{i=1}$. Define $\phi: \overline{\Delta^{r}} \setminus D_1 \longrightarrow \mathbb{R}$ to be $$ \phi(q) = {\rm inf}_{i \neq j} \{ d( f_i(q), f_j(q) ) : i,j \in \{ 1,..., m \} \}.$$ Since for each $q \in \overline{\Delta^{r}} \setminus D_1$ there are $m$-distinct points in the image $F(q)$ of $F$, we must have $\phi(q) > 0$. Note that $\phi$ is continuous because it is a composition of a holomorphic multifunction and a continuous symmetric function.

As $\phi(q) > 0$ on a compact domain $\overline{\Delta^{r}} \setminus D_1$, we obtain $${\rm inf}~  \{ \phi(q): q \in \overline{\Delta^{r}} \setminus D_1 \} > 0.$$ Choose $\delta > 0$ such that ${\rm inf}~  \{ \phi(q): q \in \overline{\Delta^{r}} \setminus D_1 \} > \delta.$

Now we shall obtain an $\epsilon > 0$ such that locally the Hausdorff distance between the branches above $\overline{\Delta^{r}} \setminus D_1$ is greater than this $\epsilon$.

Pick $q \in \overline{\Delta^{r}} \setminus D_1 $. As $\overline{\Delta^{r}} \setminus D_1$ is compact, there exists $\delta_1 > 0$ such that  $\{ (\pi^r)^{-1} (\overline{B( q, \delta_1)} ) \} $ is a disjoint union of compact connected components, for each $q \in \overline{\Delta^{r}} \setminus D_1$. Hence $$ ( \overline{B( q, \delta_1)} \times \Delta^{'n-r} ) \cap V = \{ F (\overline{B( q, \delta_1)} ) \} = \cup_j B_j$$ is a disjoint union of compact connected portions $B_j$ of the branches of $V \cap H^{n}$ above $\overline{B(q, \delta_1)}$. As $F$ is continuous and $\overline{\Delta^{r}} \setminus D_1$ is compact, there exists $ \delta_1 > 0$ small enough (smaller than our previous choice of $\delta_1$) such that $\forall \alpha, \beta \in \overline{B( q, \delta_1)}$ the distance $d( f_i(\alpha), f_i(\beta) ) < \frac{\delta}{2}$, where $f_i(\alpha), f_i(\beta)$ are situated in the branch $B_i$. 

Thus for different branches $B_1, B_2$ of $\{ F (\overline{B( q, \delta_1)} ) \}$, we have $$ d(f_1(\alpha), f_2(\beta) ) \geq d(f_1(\beta), f_2(\beta)) - d(f_1(\alpha), f_1(\beta)), $$ for each $f_i(\alpha), f_i(\beta) \in B_i.$ Using the continuity of the function $\phi$ we have $$ d(f_1(\beta), f_2( \beta) ) > \delta.$$ Combining this with $d( f_i(\alpha), f_i(\beta) ) < \frac{\delta}{2}$, we obtain $$d( f_1(\alpha), f_2(\beta) ) > \frac{\delta}{2},$$ for all $\alpha, \beta \in \overline{B(q, \delta_1)}$. Thus $d(B_1, B_2) = \emph{inf}~ \{ d(x,y):x \in B_1, y \in B_2 \} \geq \frac{\delta}{2}$. Choose $\epsilon > 0$ such that $3 \epsilon < \frac{\delta}{2}$. So $d( B_1, B_2) > 3 \epsilon$ and as the $B_i$ are compact, it follows that the Hausdorff distance between $B_1$ and $B_2$ $$d_H (B_1, B_2) > \epsilon.$$ This proves the result.
\end{proof}

\subsection{Observations concerning the discriminant set}

In the following, we note that, if $F$ is a holomorphic multifunction and if $G$ is a small perturbation of $F$ then the Hausdorff distance of the corresponding discriminant sets of $F$ and $G$ is small.

\begin{lemma}\label{discr0}
Let $F: \Delta^{r} \rightarrow (\mathbb{C}^{n})^{m}_{sym}$ be a holomorphic multifunction, and $D_{F}$ be its discriminant set. Then for each $\epsilon > 0$ there exists $\delta > 0$ such that for any holomorphic multifunction $G: \Delta^{r} \rightarrow (\mathbb{C}^{n})^{m}_{sym}$ satisfying $d_{sym} (F,G) < \delta$, we have $d_{H} (D_{F},D_{G}) < \epsilon$. 
\end{lemma}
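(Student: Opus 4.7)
Introduce the non-negative, symmetric function
$$\Phi_F(z) := \prod_{1 \le i < j \le m}\|f_i(z)-f_j(z)\|^2,$$
which depends only on the unordered multiset $\{F(z)\}$, is therefore well-defined and continuous on $\Delta^r$, and has zero set exactly $D_F = \{z:\Phi_F(z)=0\}$. A triangle-inequality computation, matching at each $z$ the entries of $G(z)$ to those of $F(z)$ so as to realize $d_{sym}(F(z),G(z))$, shows that on every compact $K \subset \Delta^r$ there is a constant $C$ with
$$\sup_{z \in K}\bigl|\Phi_F(z)-\Phi_G(z)\bigr| \le C\, d_{sym}(F,G),$$
where $C$ depends only on uniform bounds for $F$ and $G$ on $K$.

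From this the inclusion $D_G \cap K \subset (D_F)_\epsilon$ is immediate: on the compact set $K \setminus (D_F)_\epsilon$ the continuous function $\Phi_F$ is strictly positive, and hence bounded below by some $c_\epsilon > 0$; choosing $\delta$ with $C\delta < c_\epsilon$ forces $\Phi_G > 0$ on that set, so every point of $D_G \cap K$ must lie within $\epsilon$ of $D_F$.

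The reverse inclusion $D_F \cap K \subset (D_G)_\epsilon$ cannot be extracted from uniform smallness of $\Phi_F - \Phi_G$ alone, since a zero of a continuous function can in principle be destroyed by an arbitrarily small perturbation; here the complex-analytic structure must be used. Fix $z_0 \in D_F$; if $r > 1$, slice by a generic complex line $\ell$ through $z_0$ chosen so that $z_0$ is isolated in $D_F \cap \ell$, and otherwise take $\ell = \Delta^r$. On the circle $\partial B_\ell(z_0,\epsilon)$ the restriction $F|_\ell$ splits into $m$ distinct holomorphic branches whose analytic continuation around the circle realizes a non-trivial monodromy permutation $\sigma \in S_m$, coming from the ramification of the irreducible branches of the multigraph of $F$ that meet over $z_0$. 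By compactness of $\partial B_\ell(z_0,\epsilon)$ and its disjointness from $D_F$, these branches are uniformly separated there (a fact that also follows from Lemma \ref{sep} applied to the multigraph of $F$), so for $d_{sym}(F,G)$ sufficiently small, nearest-point matching identifies the branches of $G|_\ell$ on the circle with those of $F|_\ell$ and transports the same monodromy $\sigma$ to $G|_\ell$. Were $D_G \cap B_\ell(z_0,\epsilon)$ empty, the monodromy of $G|_\ell$ would be trivial --- a contradiction. Hence $D_G \cap B_\ell(z_0,\epsilon) \ne \emptyset$, yielding a point of $D_G$ within $\epsilon$ of $z_0$.

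The main obstacle lies in this last step: the non-triviality of the local monodromy of $F$ at $z_0$ depends on the ramified-branch structure of the multigraph of $F$ (and is not automatic for reducible multigraphs whose components merely cross transversally over $z_0$), while its stability under $d_{sym}$-small perturbations rests on the uniform branch-separation just noted. The first two paragraphs are purely topological; only this monodromy step genuinely uses that $F$ and $G$ are holomorphic multifunctions rather than arbitrary continuous maps into $(\mathbb{C}^n)^m_{sym}$.
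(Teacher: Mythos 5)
Your first inclusion, $D_G \cap K \subset (D_F)_\epsilon$, is correct and is in substance the same argument as the paper's: the paper invokes Lemma \ref{sep} to obtain a uniform lower bound $\epsilon_1>0$ for the separation of the branches of $F$ over the compact set $\overline{\tilde\Delta^r}\setminus B(D_F,\epsilon)$ and observes that a $\delta$-perturbation with $\delta<\epsilon_1/3$ cannot create a collision there; your function $\Phi_F$ packages the same lower bound into a single continuous function and is, if anything, cleaner and more self-contained. (Both versions tacitly assume the generic cardinality of $\{F(z)\}$ is exactly $m$, which the paper arranges by assuming $D_F$ has codimension at least one.)

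The genuine issue is the reverse inclusion $D_F\cap K \subset (D_G)_\epsilon$. You are right that it cannot follow from uniform smallness of $\Phi_F-\Phi_G$ alone; you should also be aware that the paper's own proof simply omits this half, passing from $D_G\subset B(D_F,\epsilon)$ directly to the assertion $d_H(D_F,D_G)<\epsilon$. Your monodromy argument, however, does not close the gap, for precisely the reason you flag yourself: at a normal-crossing discriminant point the local monodromy of $F$ is trivial, so transporting the (trivial) permutation to $G$ yields no contradiction, and such points are genuine points of $D_F$ since the fibre cardinality drops there. The correct tool here is Rouch\'e/Hurwitz rather than monodromy: on the slice $\ell$, compose with a linear functional $\lambda$ on $\mathbb{C}^n$ and consider the holomorphic symmetric function $\prod_{i<j}\bigl(\lambda f_i-\lambda f_j\bigr)^2$ of the slice variable, which vanishes at $z_0$ but (for generic $\lambda$ and $\ell$) not identically; $d_{sym}$-closeness makes the corresponding function for $G$ uniformly close on $\partial B_\ell(z_0,\epsilon)$, so Rouch\'e forces a zero inside. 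For $n>1$ one must additionally check, using your first inclusion to cluster the entries of $G(z)$ near those of $F(z)$, that for a generic $\lambda$ such a zero is an actual coincidence of two entries of $G$, hence a point of $D_G$. As written, your proof of the reverse inclusion is incomplete at exactly the normal-crossing points you single out.
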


\begin{proof}
 
Define $H_{F}:= \{ z \in \Delta^{r}: |\{ F(z)\}| = m \}$. Without loss of generality, we assume the discriminant set of $F$ to be codimension at least $1$. This forces $H_F$ to be non-empty. Similarly, we can define $H_{G}$ for the holomorphic multifunction $G$ to be the set of points where $\{G(z)\}$ has the maximal $m$ distinct elements. Note that $D_{F} = \Delta^{r} \setminus H_{F}$, and $D_{G} = \Delta^{r} \setminus H_{G}$.

Pick $\epsilon > 0$, and suppose $\Tilde{\Delta}^{r} \subset \Delta^{r}$ is a pre-compact polydisk about $0 \in \mathbb{C}^{r}$ such that the distance of $\partial \Delta^{r}$ is more than $\epsilon$ from $\partial \Tilde{\Delta}^{r}$. Now consider the $\epsilon$-neighborhood of $D_{F}$: $$B(D_{F}, \epsilon):= \{ z \in \Tilde{\Delta}^{r}: \| z - q \| < \epsilon,\text{for some q} \in D_{F} \}.$$

Then $\Tilde{\Delta}^{r} \setminus B(D_{F}, \epsilon) \subset H_{F} \cap \Tilde{\Delta}^{r}$. By the Lemma \ref{sep}, there exists $\epsilon_{1} > 0$ such that the local minimal distance of each branch away from the $\epsilon$-neighborhood of the discriminant set $D_{F}$ is greater than $\epsilon_{1}$.

Choose $\delta$ such that $0 < \delta < \frac{\epsilon_1}{3}$. Suppose $G$ is a holomorphic multifunction satisfying $d_{sym}(F,G) < \delta$. Now by Lemma~\ref{sep}, each branch of $G$ must be inside a $\delta$-neighborhood of the branches of $F$ away from the discriminant set $D_{F}$ of $F$. This means $D_{G}$ must be inside $B(D_{F}, \epsilon)$, otherwise the above will be violated. Thus for all $d_{sym} (F,G) < \delta$, the Hausdorff distance of the discriminant varieties $D_F, D_{G}$ is $d_{H} (D_{F}, D_{G}) < \epsilon.$
\end{proof}

We can re-state the above result in terms of analytic subvarieties as follows:

\begin{lemma}\label{discr1}
Suppose $V$ is an $r$-dimensional complex analytic subvariety of some open neighborhood $\Omega_{n} \subset \mathbb{C}^{n}$, and $D_{V}$ be the discriminant set of $V$ for the projection associated with a \emph{good} neighborhood $D^{n-r} \times \Delta^r \subset \subset \Omega_n$. Then for each $\epsilon > 0$ there exists $\delta > 0$ such that whenever $W$ is a complex analytic subvariety of $\Omega_{n}$ satisfying:
\begin{itemize}
    \item $W$ has the same maximal number of distinct branches as $V$,
    \item $d_{H} (V,W) < \delta$,
\end{itemize}
we have $d_{H}(D_{V}, D_{W}) < \epsilon$, where $D_W$ is the discriminant set of $W$.
\end{lemma}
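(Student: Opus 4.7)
The plan is to reduce this statement to the multifunction version of Lemma \ref{discr0} that is already proved above. Via the proper projection $\pi$ onto the $r$-dimensional factor of the good neighborhood, $V$ is realized as the multigraph of a holomorphic multifunction $F_V$ valued in the symmetric product of the complementary factor, and the subvariety discriminant $D_V$ coincides with the multifunction discriminant $D_{F_V}$. An analogous statement should hold for $W$ under the given hypotheses.

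First I would verify that for $d_H(V,W)$ sufficiently small, the same polydisk remains a good neighborhood for $W$. Since $V$ is disjoint from the relevant piece of the boundary with positive separation (by compactness of its closure inside the good neighborhood), Hausdorff closeness forces $W$ to avoid that boundary too, so $\pi$ is proper on $W$ as well. The hypothesis that $W$ has the same maximal number $m$ of distinct branches as $V$ then ensures that the associated multifunction $F_W$ has the same arity as $F_V$, so the two may be compared in the single symmetric-product target $(\mathbb{C}^{n-r})^{m}_{sym}$.

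The key technical step is to translate $d_H(V,W) < \delta$ into a symmetric-distance bound $d_{sym}(F_V, F_W) < \delta'$ with $\delta' \to 0$ as $\delta \to 0$. I would argue pointwise: for each $q$ in the base polydisk, pair up $F_V(q) = \langle v_1, \dots, v_m \rangle$ and $F_W(q) = \langle w_1, \dots, w_m \rangle$ by matching each $v_i$ to a nearby $w_j$. A point $(q,v_i) \in V$ has a point $(q',w') \in W$ with $\|(q,v_i) - (q',w')\| < \delta$ via Hausdorff closeness, but $(q',w')$ lies over $q'$ rather than over $q$; using the continuity of $F_W$, whose elementary symmetric polynomials are holomorphic and hence uniformly continuous on compact subsets of the base, the tuple $F_W(q)$ lies close to $F_W(q')$, so one of its entries lies close to $v_i$. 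A symmetric argument with the roles of $V$ and $W$ interchanged supplies the matching in the other direction. Taking the supremum over $q$ in the (pre-compact) base gives the desired global symmetric-distance bound.

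Once $d_{sym}(F_V, F_W) < \delta'$ is established, Lemma \ref{discr0} yields $d_H(D_{F_V}, D_{F_W}) < \epsilon$, which is exactly the required estimate since $D_{F_V} = D_V$ and $D_{F_W} = D_W$. I expect the main obstacle to be carrying out the matching step uniformly near the discriminant locus, where branches of $V$ or $W$ come together and the pairing loses uniqueness; this should be handled by working with the elementary symmetric polynomials directly, whose continuity measures closeness in the symmetric product without requiring a consistent branch-by-branch identification.
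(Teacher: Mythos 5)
Your proposal is correct and follows essentially the same route as the paper: realize $V$ and $W$ as holomorphic multifunctions into the same symmetric product $(\mathbb{C}^{n-r})^{m}_{sym}$ (which is exactly what the hypothesis on the number of branches guarantees) and invoke Lemma \ref{discr0}. The paper simply asserts that the reduction is immediate, whereas you additionally spell out the translation of $d_H(V,W)<\delta$ into a fiberwise symmetric-distance bound via equicontinuity of the elementary symmetric functions; this is a worthwhile detail the paper leaves implicit, and your handling of it is sound.
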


Note that the \emph{branches} of $V$ here refer to the covering sheets $\pi^{-1}_V(\Delta^{r} \setminus D_V)$ of the proper projection mapping $\pi_V$ from the good neighborhood. The requirement of $W$ admitting the same maximal number of distinct branches as $V$ enables us to express them as holomorphic multifunctions mapping to $(\mathbb{C}^{n})^{m}_{sym}$ for the same~$m$, therefore the Lemma \ref{discr1} immediately follows from Lemma \ref{discr0}.

\subsection{On the maximal number of distinct branches}

For any irreducible $V$, we show in the following, the maximal number of distinct branches of any irreducible $W$ with sufficiently small Hausdorff distance $d_{H}(V, W)$ must be an integer multiple of the maximal number of branches of $V$.

\begin{lemma}\label{no}
    Suppose $V$ is a singular irreducible $r$-dimensional subvariety of some open neighborhood $\Omega_{n} \subset \mathbb{C}^{n}$ of the origin with $0 \in V^{\times}$, and the polydisk $D^{r} \times \Delta^{'(n-r)}$ is a good neighborhood for $V$ around $0$. Then there exists an $\epsilon > 0$ such that for all $r$-dimensional irreducible subvariety $W$ of $\Omega_{n}$ with $d_{H} (V, W) < \epsilon$, the maximal number of distinct branches of $W$ above $D^{r}$ is an integer multiple of the maximal number of distinct branches of $V$.
\end{lemma}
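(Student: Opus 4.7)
The plan is to show that, after shrinking $\epsilon$, every maximal-cardinality fiber of $\pi_W$ partitions into $m$ equal-sized clusters indexed by the branches of $V$, where $m$ denotes the generic fiber size of the proper projection $\pi_V:V\cap(D^{r}\times\Delta^{'(n-r)})\to D^{r}$. First I would fix $\epsilon>0$ small enough that: (i) every $W$ with $d_{H}(V,W)<\epsilon$ stays away from $D^{r}\times\partial\Delta^{'(n-r)}$, so that $\pi_W:W\cap(D^{r}\times\Delta^{'(n-r)})\to D^{r}$ is proper with well-defined generic degree $m'$; and (ii) on some pre-compactly contained sub-polydisk $\overline{\Delta^{r}}\subset D^{r}$ lying outside a small open neighborhood $D_1$ of $D_V$, the value $\epsilon$ is smaller than one-third of the uniform separation between the $V$-branches supplied by Lemma~\ref{sep}. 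Then for every $q\in\overline{\Delta^{r}}\setminus D_1$ each point of $\pi_W^{-1}(q)$ lies within $\epsilon$ of a unique branch $v_i(q)$ of $V$, yielding a partition $\pi_W^{-1}(q)=C_1(q)\sqcup\cdots\sqcup C_m(q)$, with $C_i(q)$ collecting those $W$-points closest to $v_i(q)$.

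Next I would verify that the cluster sizes $k_i(q):=|C_i(q)|$ are locally constant on $D^{r}\setminus(D_V\cup D_W)$: this follows from the continuous dependence of the sheets of $V$ and $W$ on $q$, combined with the uniform $V$-branch separation, which prevents any $W$-sheet from switching cluster. Since $D_V$ and $D_W$ both have complex codimension at least one, the complement $D^{r}\setminus(D_V\cup D_W)$ is connected and the inclusion into $D^{r}\setminus D_V$ is $\pi_1$-surjective, so every element of the $V$-monodromy group is represented by some loop $\gamma$ disjoint from $D_W$. If such a loop induces the permutation $\sigma$ of the $V$-sheets and $\tau$ of the $W$-sheets, continuity of the clustering along $\gamma$ forces $\tau(C_i)=C_{\sigma(i)}$, and hence $k_i=k_{\sigma(i)}$. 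Because $V$ is irreducible, its monodromy acts transitively on $\{v_1,\dots,v_m\}$, so all the $k_i$ coincide with a common integer $k$, giving $m'=mk$ as desired.

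The delicate point is the compatibility between the $V$-clustering and the $W$-monodromy: one must verify that, as $q$ traverses a loop in $D^{r}\setminus(D_V\cup D_W)$, no $W$-sheet ever drifts outside $\epsilon$ of its assigned $V$-target, so that the cluster labels persist throughout. This is exactly where Lemma~\ref{sep} does the real work, providing a uniform lower bound on $V$-branch separation that dwarfs the Hausdorff discrepancy between $V$ and $W$. The secondary bookkeeping issue is to select $\epsilon$ uniformly over $\overline{\Delta^{r}}$ and to confirm that the transitive $V$-monodromy remains transitive when loops are restricted to $D^{r}\setminus(D_V\cup D_W)$ (which uses that $D_W$ has real codimension at least two inside $D^{r}\setminus D_V$). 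Once these are in place, $m\mid m'$ follows immediately.
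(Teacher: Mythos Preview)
Your argument is correct and rests on the same core idea as the paper's proof: use Lemma~\ref{sep} to assign each $W$-sheet over a generic base point to the unique $V$-sheet within distance $\epsilon$, and then show this assignment is balanced. The paper packages the second step differently: rather than tracking cluster sizes under monodromy, it turns the clustering into an explicit map $f:\pi_W^{-1}(U)\to\pi_V^{-1}(U)$ over a small ball $U\subset D^{r}\setminus(D_V\cup D_W)$, observes that $\pi_W=\pi_V\circ f$, and cites a covering-space exercise from Hatcher to conclude that $f$ is itself a finite covering of some degree $\ell$, giving $m'=k\ell$.

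The two arguments are morally equivalent, but your monodromy version is a bit more explicit on one point the paper leaves tacit: over a small ball $U$ the set $\pi_V^{-1}(U)$ has $k$ disjoint components, and a covering over a disconnected base need not have constant degree. Your use of the transitive $V$-monodromy (from irreducibility of $V$) is exactly what forces all the $k_i$ to coincide; the paper's formulation implicitly relies on the same irreducibility to justify writing a single $\ell$. Your approach is thus slightly more self-contained, while the paper's is more concise by outsourcing to the covering-space lemma.
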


\begin{proof}
    Because $D^{r} \times \Delta^{'(n-r)}$ is a good neighborhood, the map $\pi_{V}: V \cap (D^{r} \times \Delta^{'(n-r)}) \rightarrow D^{r}$ is a proper projection. Note that away from the discriminant set $D_{\pi_V}$ the projection map $\pi_V$ is a $k$-to-$1$ covering. The number $k$ is the maximal number of distinct branches of $V \cap (D^{r} \times \Delta^{'(n-r)})$.

    For any irreducible $r$-dimensional subvariety $W$ of $\Omega_{n}$ with sufficiently small Hausdorff distance $d_H(V,W)$, the neighborhood $D^{r} \times \Delta^{'(n-r)}$ becomes good for $W$ as well. Thus the projection $\pi_W: W \cap (D^{r} \times \Delta^{'(n-r)}) \rightarrow D^{r}$ is proper, and away from the discriminant set $D_{\pi_W}$ the map is an $m$-to-$1$ covering, and the number $m$ is the maximal number of distinct branches of $W \cap (D^{r} \times \Delta^{'(n-r)})$.

    Suppose $p \in D^{r}$ is a point not contained in either of the discriminant sets $D_{\pi_V}$ or $D_{\pi_W}$, and $U \subset D^{r}$ is some ball around $p$ that does not meet either of the discriminant sets $D_{\pi_V}$ or $D_{\pi_W}$. Then both $\pi_V$ and $\pi_W$ are $k$-to-$1$ and $m$-to-$1$ covering from $\pi^{-1}_V(U)$ and $\pi^{-1}_W(U)$ onto $U$ respectively.  

    Using the Lemma \ref{sep} above, for sufficiently small $d_H (V,W)$, each branch of $\pi^{-1}_W(U)$ must lie in an $\epsilon$-neighborhood of exactly one branch of $\pi^{-1}_V(U)$.

    Now the projection $\pi_W$ can be factored into $\pi_W = \pi_V \circ f$, where $f: \pi^{-1}_W(U) \rightarrow \pi^{-1}_V(U)$ is another projection defined as follows. For any $q \in \pi^{-1}_W(U)$ there is a unique $p \in \pi^{-1}_V( \pi_W (q))$ with $\| p - q \| < \epsilon$. Let $f(q) = p$.

    Now since $\pi_W$ and $\pi_V$ are covering maps above $U$, and $U$ is locally path connected, $f$ must also be a finite degree covering from $\pi^{-1}_W(U)$ onto $\pi^{-1}_V(U)$ (see \cite{Hatcher}, page 80, problem 16). So if $f$ is a $\ell$-to-$1$ covering, we obtain $m = k \ell $. Thus the maximal number of distinct branches of $W$ must be an integer multiple of the maximal number of distinct branches of $V$.
\end{proof}

Suppose $V$ is an irreducible, singular, one-dimensional subvariety of some domain $\Omega_2 \subset \mathbb{C}^2$. If we additionally require $W$ to be a one-dimensional subvariety of $\Omega_2$ \emph{admitting} \emph{at} \emph{most} \emph{one} \emph{non-normal} \emph{crossing} \emph{type} \emph{discriminant} \emph{point}, we can improve Lemma \ref{discr1} as follows. We show that, once these conditions are imposed, the requirement of $W$ having the same maximal number of branches as $V$ can be removed.

\begin{lemma}\label{onediscr}
    Suppose $V$ is a singular one-dimensional irreducible complex analytic subvariety of some open neighborhood $\Omega_{2} \subset \mathbb{C}^{2}$ of the origin, $0 \in V^{\times}$, and $D \times \Delta^{'1} \subset \subset \Omega_2$ is good for $V$ around $0$. Then for each $\epsilon > 0$, there exists $\delta > 0$ such that, every one-dimensional irreducible subvariety $W$ of $\Omega_{2}$ with at most one non-normal crossing type discriminant point and with $d_{H}(V, W) < \delta$, has that the non-normal crossing type discriminant point of $W$ must lie within a $\epsilon$-ball around the discriminant point of $V$.
    
\end{lemma}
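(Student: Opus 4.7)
The plan is to argue by contradiction using a monodromy count. After possibly shrinking $D$, the Puiseux parametrization theorem expresses $V$ locally as $t \mapsto (t^k, g(t))$ for some $k \geq 2$, so $V$ admits exactly $k$ distinct sheets above $D \setminus \{0\}$ with $0$ as its unique discriminant point. By Lemma~\ref{no}, once $d_{H}(V,W) < \delta_0$ for some $\delta_0 > 0$, any irreducible one-dimensional $W$ satisfying the hypotheses has exactly $k\ell$ distinct sheets above $D$ for some integer $\ell \geq 1$, and $D \times \Delta^{'1}$ is automatically good for $W$ as well.

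Next I would analyze the monodromy of the covering $\pi_W : W \setminus \pi_W^{-1}(D_W) \to D \setminus D_W$. At each normal crossing discriminant point the local preimage decomposes as a union of smooth graphs, so a small loop around it induces the trivial permutation on the generic fiber. Consequently the entire monodromy is generated by the single permutation $\sigma$ obtained by going once around the non-normal crossing discriminant point $p_W$; such a $p_W$ must actually exist, for otherwise $\sigma$ would be trivial, contradicting irreducibility of $W$ with $k\ell \geq 2$ sheets. Irreducibility of $W$ then forces $\langle \sigma \rangle$ to act transitively on the $k\ell$ sheets, so $\sigma$ is a single cycle of length $k\ell$.

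Now suppose, toward contradiction, that $d(p_W, 0) \geq \epsilon$. Fix a small ball $U \subset D$ around $p_W$ disjoint from a neighborhood of $0$, so that $V$ is smooth over $\overline{U}$ and decomposes into $k$ disjoint smooth sheets. Lemma~\ref{sep} provides a uniform separation $\eta > 0$ between these $V$-sheets above $\overline{U}$. Choosing $\delta < \min(\delta_0, \eta/3)$, the hypothesis $d_{H}(V, W) < \delta$ forces each sheet of $W$ over $U$ to lie inside the $\delta$-tube around a unique $V$-sheet, partitioning the $k\ell$ sheets of $W$ into $k$ groups of equal size $\ell$ (equality of group sizes following from the argument in the proof of Lemma~\ref{no}, since the monodromy of $V$ around $0$ permutes its sheets transitively). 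Because the $V$-sheets above $\overline{U}$ are single-valued and the $\delta$-tubes are pairwise disjoint along a small loop around $p_W$, continuity of any lift forces the induced permutation $\sigma$ to preserve this partition.

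The contradiction is then immediate: a single $k\ell$-cycle cannot preserve a partition into $k \geq 2$ non-trivial blocks, so we must have $d(p_W, 0) < \epsilon$, which is the claim. The main technical obstacle I anticipate is rigorously justifying that $\sigma$ respects the partition, i.e.\ that a continuous branch-lift along the loop cannot jump between disjoint $\delta$-tubes; this reduces to a careful use of Lemma~\ref{sep} once $\delta$ is chosen small compared with the separation $\eta$ guaranteed there.
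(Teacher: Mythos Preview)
Your argument is correct and, like the paper's, is a monodromy argument built on the sheet-separation Lemma~\ref{sep}, but you run it from the opposite end. The paper encircles the discriminant point of $V$: it notes that $V$ has non-trivial monodromy around $\partial B(0,\epsilon)$, and that if the non-normal crossing point of $W$ lay outside $B(0,\epsilon)$ then $W$ would have only normal-crossing (hence trivial) local monodromy inside, so $\pi_W^{-1}(\partial B(0,\epsilon))$ would have trivial monodromy; Lemma~\ref{sep} then pins each $W$-sheet to a unique $V$-sheet along that loop, so $W$ inherits $V$'s non-trivial permutation---a contradiction. You instead encircle $p_W$: you use global irreducibility of $W$ together with the ``at most one non-normal crossing point'' hypothesis to promote the local monodromy at $p_W$ to a full $k\ell$-cycle, and then use triviality of $V$'s monodromy near $p_W$ to trap $\sigma$ inside each $V$-tube. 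The paper's route is slightly shorter since it never needs the cycle structure of $\sigma$, Lemma~\ref{no}, or the block decomposition; your route makes the role of the discriminant hypothesis on $W$ more transparent.

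One wording issue: in the last step you say a $k\ell$-cycle ``cannot preserve a partition into $k\geq 2$ non-trivial blocks.'' Read as ``permutes the blocks among themselves'' this is false---the $4$-cycle $(1\;2\;3\;4)$ preserves $\{\{1,3\},\{2,4\}\}$. What your setup actually yields, because the $V$-sheets over $U$ are single-valued along the loop, is that $\sigma$ \emph{fixes each block setwise}; state it that way and the contradiction is immediate, since a $k\ell$-cycle has no proper non-empty invariant subset.
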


\begin{proof} Let $\pi_V$ be the proper projection from $V \cap (D \times \Delta^{'1})$ associated with the good neighborhood $D \times \Delta^{'1}$ along $\Delta^{'1}$. Without loss of generality, we assume that $\pi_V(0)$ is the only discriminant point associated with $\pi_V$, and this is a non-normal crossing type discriminant point.

 Choose $\epsilon > 0$ sufficiently small. We \emph{claim} that there exists $\delta > 0$ such that if $d_{H}(V, W) < \delta$ then $W$ must admit a non-normal crossing type discriminant point inside $B(0, \epsilon) \subset D$.

 Consider the projection $\pi_V: \pi^{-1}_V(\partial B(0, \epsilon)) \rightarrow \partial B(0, \epsilon)$. Using the non-trivial monodromy action of the fundamental group $\pi_1 (\partial B(0, \epsilon), x_0)$ on the set $\pi^{-1}_V( x_0)$, we can show that the non-normal crossing type discriminant point of $W$ must lie within $B(0, \epsilon)$. Indeed, if $\pi_W$ only contained normal crossing discriminant points inside $B(0, \epsilon)$, any irreducible component $\pi^{-1}_W( \partial B(0, \epsilon))$ will only have trivial monodromy. But Lemma \ref{sep} forces an irreducible component of $\pi^{-1}_W( \partial B(0, \epsilon))$ to be always sufficiently close to $\pi^{-1}_V( \partial B(0, \epsilon))$, and thus to have non-trivial monodromy. Thus the non-normal crossing type discriminant point of $W$ must lie inside $B(0, \epsilon)$. 
\end{proof}

\subsection{Proof of the Main Result}\label{epsilon}

Before diving into the proof of Theorem \ref{main}, we shall briefly explain a key observation from the proof of the result by Lyubich and Peters \cite{Lyubich-Peters14}. The $\epsilon > 0$ in the Lyubich-Peters theorem does not depend on the parametrization function $f: D \rightarrow \mathbb{C}^{2}$ but only on its image. This is apparent from all three proofs they provide in their paper. Indeed, in their main proof of the result (Proposition $12$, Section $5$ of \cite{Lyubich-Peters14}), the first step is to apply suitable biholomorphic changes of coordinates in domain and range to obtain a convenient equation for the image variety $f(D)$. And the rest of the argument is founded upon this description of the image variety $f(D)$. Thus if $\Tilde{f}: D \rightarrow \mathbb{C}^{2}$ is another holomorphic mapping with $f(D) = \Tilde{f}(D)$, then the same convenient equation for the image variety $f(D)$ describes $\Tilde{f}(D)$, and the rest of the argument can be adapted without any modification and with the same $\epsilon$.

Note that this is precisely what makes Theorem \ref{main} possible. To apply the Lyubich-Peters result on a subvariety $W$ that has possibly $k$-times as many branches as $V$, we need to obtain close enough parametrizations of both $V$ and $W$. So for each different $W$, we need to find a suitable but perhaps many-to-one parametrization of $V$. But since the $\epsilon > 0$ only depends on the image $V$ of these parametrizations, we can expect a result like Theorem \ref{main} to be true. 

Now combining the previous observations we shall prove the main result of this article. Let us restate it for the reader's convenience:

\begin{theoremA}\label{main}

Let $V$ be a singular one-dimensional subvariety of some open neighborhood $\Omega_2 \subset \mathbb{C}^{2}$ of the origin and $0 \in V^{\times}$. Moreover suppose the polydisk $D \times \Delta'^{1} \subset \subset \Omega_2$ is a \emph{good} neighborhood of $V$ around $0$. Then there exists $\epsilon > 0$ (depending on the good neighborhood) such that for all one-dimensional subvariety $W$ of $\Omega_2$ with:
\begin{itemize}
    \item $W$ has at most one non-normal crossing type discriminant point associated with the projection onto $D$,
    \item $d_{H} (V, W) < \epsilon$,
\end{itemize}
the intersection $V \cap W$ is non-empty.

\end{theoremA}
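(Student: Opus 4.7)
The plan is to parametrize $V$ and $W$ simultaneously via Puiseux with matched leading powers, then apply Lyubich-Peters together with the observation of Section \ref{epsilon} that its threshold $\epsilon^{*}$ depends only on the image variety, not on the specific parametrization. First reduce to $V$ irreducible: decompose $V = \bigcup_{i} V_{i}$; each $V_{i}$ lies in the good neighborhood inherited from $V$, and $V \cap W = \emptyset$ forces $V_{i} \cap W = \emptyset$ for every $i$; a similar reduction applies to $W$ (restrict to the component of $W$ that tracks the chosen $V_{i}$). So assume $V, W$ irreducible, with $m_{V} := \deg \pi_{V} \geq 2$, and (WLOG) take the unique discriminant point of $\pi_{V}$ in $D$ to be $\pi_{V}(0) = 0$. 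Puiseux (Section \ref{para}) yields a parametrization $f(z) = (z^{m_{V}}, f_{2}(z))$ on $D$. For $d_{H}(V, W)$ sufficiently small, Lemma \ref{onediscr} localizes the unique non-normal crossing discriminant point of $W$ arbitrarily close to $0$ (up to a small biholomorphism we may take it to be exactly $0$), and Lemma \ref{no} gives $m_{W} = \ell \cdot m_{V}$ for some integer $\ell \geq 1$. Puiseux on $W$ then produces $g(z) = (z^{m_{W}}, g_{2}(z))$.

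Now reparametrize $V$ by $\tilde{f}(z) := f(z^{\ell}) = (z^{m_{W}}, f_{2}(z^{\ell}))$, an $\ell$-to-one holomorphic map onto $V$ which shares its first coordinate $z^{m_{W}}$ with $g$. The key technical step is to show that after possibly precomposing $g$ with a rotation $z \mapsto \zeta z$ for some $m_{W}$-th root of unity $\zeta$, the sup-norm $\|\tilde{f} - g\|_{\infty}$ on a slightly shrunken disk can be made as small as desired by shrinking $d_{H}(V, W)$. Since the first coordinates agree exactly, this reduces to bounding $|\tilde{f}_{2}(z) - g_{2}(z)|$. Away from the discriminant, Lemma \ref{sep} provides uniform separation between the $m_{W}$ branches of $\tilde{f}(D) = V$, so each branch of $g(D) = W$ tracks a unique branch of $V$ inside a thin tubular neighborhood; fixing a base point $z_{0} \in \partial D$, matching branches there using this separation, and propagating the matching by analytic continuation, one selects the correct $\zeta$ so that $\tilde{f}$ and $g$ agree branch-by-branch.

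Finally, apply the Lyubich-Peters theorem to $\tilde{f}$ and $g$. By the observation in Section \ref{epsilon}, the Lyubich-Peters threshold $\epsilon^{*}$ depends only on the image $V = \tilde{f}(D)$, not on $\ell$ or on the specific choice of parametrization, and therefore a single $\epsilon^{*}$ serves for every $W$ (in particular, uniformly in $\ell$). Choose the original $\epsilon$ of Theorem \ref{main} small enough that the preceding paragraph yields $\|\tilde{f} - g\|_{\infty} < \epsilon^{*}$; then $\tilde{f}(D) \cap g(D) \neq \emptyset$, and hence $V \cap W \neq \emptyset$. The main obstacle is precisely the sup-norm matching step: upgrading Hausdorff closeness of the images to sup-norm closeness of their Puiseux parametrizations, since a priori the branch matching between $V$ and $W$ is only determined up to an $m_{W}$-th root of unity acting on the domain of $g$, and one must use the branch-separation of Lemma \ref{sep} together with the monodromy structure around the single non-normal crossing discriminant point to make this choice consistently.
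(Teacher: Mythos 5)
Your proposal follows essentially the same route as the paper's proof: reduce to the irreducible case, invoke Lemma \ref{onediscr} to localize the non-normal crossing discriminant point of $W$ near $0$ (absorbing the offset by a small disk automorphism), use Lemma \ref{no} to get $m_W = \ell\, m_V$, precompose the Puiseux parametrization of $V$ with a power map so both curves are parametrized over the same $m_W$-fold cover, and then apply Lyubich--Peters using the fact that its threshold depends only on the image variety. The one place you go beyond the paper is in explicitly flagging and resolving the root-of-unity ambiguity in matching the Puiseux parametrizations branch-by-branch via Lemma \ref{sep} and the monodromy; the paper's proof asserts the resulting sup-norm bound over $\partial D_1$ without isolating this step, so your treatment is a correct and slightly more careful account of the same argument.
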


\begin{proof}
    Without loss of generality, suppose $V$ is irreducible. 
    For a subvariety $W$ of $\Omega_2$ with a sufficiently small $d_H(V, W) > 0$, the neighborhood $D \times \Delta^{'1}$ is good for $W$ as well. We denote the proper projections from $V \cap (D \times \Delta^{'1})$ and $W \cap (D \times \Delta^{'1})$ onto $D$ by $\pi_V$ and $\pi_W$ respectively. Suppose $D$ is the unit disk.

    Choose a smaller concentric disk $D_1 \subset D$. Using the Puiseux parametrization technique described in Section \ref{para} (or exercise $6.7.5$, page 146 of \cite{Lebl20}), the set $\pi^{-1}_V(D_1)$ can be written as the image of $\psi_V: D_1 \rightarrow \mathbb{C}^{2}$ defined as $z \mapsto (z^{\ell}, g(z))$, where $\psi_V$ is one-one and onto and $g$ is a holomorphic function. As explained at the beginning of this subsection, by the Lyubich-Peters result there exists an $\epsilon_1 > 0$ depending only on the singular subvariety $\psi_V(D_1)$. We shall use this $\epsilon_1 > 0$ to obtain the $\epsilon$ stated in this theorem.

    Choose $\delta > 0$. Using Lemma \ref{onediscr}, there exists $\epsilon_2 > 0$ such that for all subvariety $W$ of $\Omega_2$ with at most one non-normal crossing type discriminant point $\alpha$ satisfying $d_H(V, W) < \epsilon_2$, the non-normal crossing type discriminant point of $W$ must lie within a $\delta$-ball of the discriminant point $0 \in D$ of $\pi_V$. Using Lemma \ref{onediscr}, this $\delta> 0$ can be chosen independently of the subvariety $W$. 
        
    We seek to parametrize $\pi^{-1}_V(D_1)$ and $\pi^{-1}_W(D_1)$ simultaneously using the Puiseux technique. Suppose $\pi^{-1}_V(D_1)$ has $\ell$-distinct branches. By Lemma \ref{no}, $\pi^{-1}_W(D_1)$ must contain an irreducible component $W_1$ that has $m_W \ell$-distinct branches. Following the Puiseux technique, we need to traverse $m_W \ell$-times around $\partial D_1$ to obtain a loop in $\pi^{-1}_W(\partial D_1) \cap W_1$. In order to parametrize $\pi^{-1}_V(D_1)$ and $W_1 \subset \pi^{-1}_W(D_1)$ simultaneously, we shall pre-compose $\psi_V$ with $z^{m_W}$ to obtain a parametrization of $\pi^{-1}_V(D_1)$. The new mapping $\Tilde{\psi}_{V}: D_1 \rightarrow \psi_V(D_1)$ is thus given by $z \mapsto (z^{\ell m_W}, g(z^{m_W}))$.

    To parametrize $W_1 \subset \pi^{-1}_W(D_1)$, we use the following procedure. Suppose $\phi_{\alpha}$ is an automorphism of the disk $D_1$ that maps $0 \in D_1$ to the non-normal crossing type discriminant point $\alpha \in D_1$. By $\Phi_{\alpha}: D_1 \times \Delta^{'1} \rightarrow D_1 \times \Delta^{'1}$ we denote the automorphism $\Phi_\alpha (z, w) = (\phi_{\alpha} (z), w)$. Note that the complex analytic curve $\Phi^{-1}_{\alpha}(W_1) = \Tilde{W}$ has a non-normal crossing type discriminant point at $0 \in D_1$, and $d_H(V, \Tilde{W})$ is sufficiently small. Using the Puiseux technique, we simultaneously parametrize $\Tilde{W}$ and $V \cap (D_1 \times \Delta^{'1})$ by the mappings $\psi_{\Tilde{W}}$ and $\psi_V(z^{m_W})$ from $D_1$. Since $\Phi_{\alpha} \circ \psi_{\Tilde{W}}$ is a parametrization of $W_1$, we obtain the following:

    \begin{align*}
        \| \Phi_{\alpha} \circ \psi_{\Tilde{W}}(z) - \psi_V (z^{m_W}) \| \leq \|  \Phi_{\alpha} \circ \psi_{\Tilde{W}} (z) - \psi_{\Tilde{W}} (z) \| + \|\psi_{\Tilde{W}}(z) - \psi_V (z^{m_W}) \|.
        \end{align*}
    Since the non-normal crossing type discriminant point $\alpha$ of $W$ is within a $\delta$-ball centered at $0 \in D_1$, the distance $\| \Phi_{\alpha} - id_{D_1} \|$ must be sufficiently small. Thus for sufficiently small $d_H(V,W)$ we can make the term $\| \Phi_\alpha \circ \psi_{\tilde W}(z) -  \psi_{\tilde W}(z)\| < \frac{\epsilon_1}{2}$ where the  $\epsilon_1$ associated with $V \cap (D \times \Delta'^1)$ was obtained from the Lyubich-Peters result.

    For any $W$ with sufficiently small $d_H(V, W)$, the \emph{vertical} distance $d_{sym}( \pi^{-1}_V (z), \pi^{-1}_W (z))$ of the sets $\pi^{-1}_V (z)$ and $\pi^{-1}_W (z)$ above $\partial D_1$ is defined as $$ d_{sym}( \pi^{-1}_V (z), \pi^{-1}_W (z)):= {\rm sup} \{{\rm inf} \{ d(z_k, w_j): j \in \{1, \dots, m_W\} \}: k \in \{1, \dots, m\} \},$$ where $\pi^{-1}_V (z):= \{z_1, \dots, z_{m}\}$ and $\pi^{-1}_W (z):= \{w_1, \dots, w_{m_W}\}$, and $d$ is the usual complex euclidean distance of $\mathbb{C}^2$. Now as $V$ is fixed, for any one-dimensional subvariety $W$ with its Hausdorff distance $d_H(V, W)$ bounded above by some sufficiently small upper bound, the vertical distance $d_{sym}(\pi^{-1}_V(z), \pi^{-1}_W (z))$ above $z \in \partial D_1$ must also be bounded above. Since $V$ is fixed, by shrinking $d_H(V, W)$, the quantity ${\rm sup}\{d_{sym}(\pi^{-1}_V(z), \pi^{-1}_W (z)): z \in \partial D_1\}$ can be made appropriately small. 

    Thus for sufficiently small $d_H(V, W)$, we can also make the term $\|\psi_{\Tilde{W}}(z) - \psi_V (z^{m_W}) \|$ smaller than $\frac{\epsilon_1}{2}$. Hence for any such $W$, we can obtain parametrizations $\Phi_{\alpha} \circ \psi_{\Tilde{W}}$ and $\psi_V$ such that $\|\Phi_{\alpha} \circ \psi_{\Tilde{W}}(z) - \psi_V (z^{m_W}) \| < \epsilon_1$.

    Applying the Lyubich-Peters result, we obtain $V \cap W \neq \emptyset$.
\end{proof}

\section{Applications of the sufficient condition}

\subsection{A holomorphic multifunction analog}

As a corollary, we shall prove an analog of the Lyubich-Peters result for holomorphic multifunctions. Recall that, the set of points in the image of a holomorphic multifunction $F: D \rightarrow (\mathbb{C}^{2})^{m}_{sym}$ inside $\mathbb{C}^{2}$ is denoted by $\{F(D)\}$: $$ \{F(D)\}:= \{f_j (z): F(z) = \langle f_1 (z), \dots, f_m (z) \rangle \in (\mathbb{C}^{2})^{m}_{sym}, z \in D \}.$$

\begin{corollaryC}\label{mfunction}

Suppose $F: D \rightarrow (\mathbb{C}^{2})^{m}_{sym}$ is a holomorphic multifunction, the image $\{F(D) \}$ is an irreducible subvariety of some open subset of $\mathbb{C}^{2}$, and $0 \in \{F(D)\}$ is a singular point of the image of the multifunction. Moreover suppose the polydisk $D_1 \times {\Delta}'^{2} \subset \mathbb{C} \times \mathbb{C}'^{2}$ is a good neighborhood of the multigraph of $F$ about $0$. Then there exists $\epsilon > 0$ such that for every holomorphic multifunction $G: D \rightarrow (\mathbb{C}^{2})^{m}_{sym}$ satisfying:
\begin{itemize}
    \item the multi-graph of $G$ has at most one non-normal crossing type discriminant point associated with the projection onto $D_1$,
    \item $\| F - G\|_{sym} < \epsilon$,
\end{itemize}
we have that $\{F(D)\} \cap \{G(D)\} \neq \emptyset$.

\end{corollaryC}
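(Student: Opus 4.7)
The plan is to apply Theorem~\ref{main} to $V = \{F(D)\}$ and $W = \{G(D)\}$, regarded as one-dimensional subvarieties of $\mathbb{C}^2$. First, I would transfer the good-neighborhood structure from the multigraph setting to the image: since $V$ is irreducible and singular at $0$, and since the multigraph $\Gamma_F$ lies inside $D_1 \times \Delta'^2$, the image $V$ is an analytic subvariety of $\Delta'^2 \subset \mathbb{C}'^2$. After, if needed, a small linear change of coordinates on $\mathbb{C}'^2$ and shrinking to a sub-polydisk $\Delta \times \Delta''^1 \subset \Delta'^2$, this sub-polydisk serves as a good neighborhood of $V$ around $0$, with associated proper projection $\pi_V : V \cap (\Delta \times \Delta''^1) \to \Delta$.

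Next, I would show $\| F - G \|_{sym} < \delta$ implies $d_H(V, W) < \delta$ directly from the definitions: for each $y \in F(z)$, the symmetric closeness of the tuples $F(z)$ and $G(z)$ produces some $y' \in G(z)$ with $\|y - y'\| < \delta$, so $V \subset W_{\delta}$, and the reverse inclusion follows by symmetry.

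The main step, and the principal obstacle, is to verify that the multigraph discriminant condition on $G$ (at most one non-normal crossing type point for the projection onto $D_1$) forces the corresponding image condition on $W$ for $\pi_V$. The key is that tangential merging of multigraph branches in $\mathbb{C}^3$ projects under $(z,y) \mapsto y$ to tangential behavior of the sheets of $W$ in $\mathbb{C}^2$; combined with the closeness of $V$ and $W$ together with Lemmas~\ref{discr0} and~\ref{onediscr} applied to $V$ and $W$ as subvarieties of $\mathbb{C}^2$, any non-normal crossing discriminant point of $W$ for $\pi_V$ must lie near that of $V$, which by hypothesis is a single point $0 \in \Delta$. A small generic rotation of the coordinates on $\mathbb{C}'^2$ can, if necessary, be used to rule out accidental tangencies among sheets of $W$ arising from distinct $z$-values whose images in $\mathbb{C}^2$ coincide.

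Finally, with the hypotheses of Theorem~\ref{main} in place, I would conclude $V \cap W \neq \emptyset$, that is, $\{F(D)\} \cap \{G(D)\} \neq \emptyset$.
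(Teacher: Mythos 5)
Your reduction founders at exactly the step you call the principal obstacle, and the argument you offer there does not close it. The hypothesis of the corollary concerns the discriminant of the \emph{multigraph} of $G$ for the projection $D_1 \times \Delta'^{2} \to D_1$, i.e.\ the projection onto the \emph{domain parameter} $z$; the hypothesis of Theorem~\ref{main} concerns the discriminant of the \emph{image} $W=\{G(D)\}\subset\mathbb{C}'^{2}$ for a projection $\pi_V$ internal to $\mathbb{C}'^{2}$. These are different projections and neither discriminant controls the other. The implication you need is that a non-normal-crossing point of $W$ for $\pi_V$ can only arise from a non-normal-crossing point of the multigraph over $D_1$ --- and that is false. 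Even if the multigraph of $G$ has \emph{no} discriminant points over $D_1$ (all $m$ values of $G(z)$ distinct for every $z$), each local branch $g_j:U\to\mathbb{C}'^{2}$ can have critical points, so its image acquires cusps (e.g.\ $z\mapsto(z^{2},z^{3})$), and distinct branches coming from far-apart $z$-values can be mutually tangent; both produce non-normal-crossing discriminant points of $W$ for $\pi_V$ that have no counterpart upstairs. Your observation that tangencies of multigraph branches project to tangencies of image sheets goes in the harmless direction, and the ``small generic rotation'' remark does not eliminate cusps of individual branches. Since Example~5.3 of the paper shows the discriminant hypothesis of Theorem~\ref{main} cannot simply be dropped, the black-box application of Theorem~\ref{main} is not justified.

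The paper takes a different, and safer, route: it does not invoke Theorem~\ref{main} as a black box but reruns its proof with the Puiseux construction carried out over $D_1$ itself, the common domain of $F$ and $G$. Both multigraphs are degree-$m$ branched covers of $D_1$ (so the sheet-multiplicity complication $m_W\ell$ of Theorem~\ref{main} disappears, as the paper's remark notes), the at-most-one non-normal-crossing hypothesis is exactly what is needed to run the monodromy/loop construction for $G$'s multigraph simultaneously with $F$'s, and composing the resulting parametrizations of the multigraphs with the projection to $\mathbb{C}'^{2}$ yields close parametrizations of the two images, to which Lyubich--Peters applies directly. Your step showing $\|F-G\|_{sym}<\delta$ implies $d_H(\{F(D)\},\{G(D)\})<\delta$ agrees with the paper and is fine; the rest of the proposal needs to be reorganized along these lines rather than as a reduction to the statement of Theorem~\ref{main}.
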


\begin{proof}
    Imitating the proof of Theorem \ref{main}, we obtain sufficiently close parametrizations of the images of $F$ and $G$ and show a non-empty intersection using the Lyubich-Peters result. Moreover, for holomorphic multifunctions $F,G : D \rightarrow (\mathbb{C}^{2})^{m}_{sym}$ with the symmetric product space distance $d_{sym} (F, G) < \epsilon$, the Hausdorff distance of the images of $F$ and $G$ must be less than $\epsilon$. Further, note that the maximal number of branches of the images of $F$ and $G$ are both $m$. Thus using the Puiseux technique as in the previous theorem, we can obtain suitably close parametrizations of $\{F(D)\}$ and $\{G(D)\}$.
\end{proof}

\begin{remark}
    Example $5.3$, below, shows that a holomorphic multifunction analog of the Lyubich-Peters result is not true without the hypothesis on the discriminant set. Note that the issue in the proof of Theorem \ref{main} about $W$ having potentially many more sheets than $V$ is not present in the Corollary \ref{mfunction}. 
\end{remark}

\subsection{The higher dimensional analog of Lyubich-Peters result}

The sufficient condition can be used to prove the higher dimensional analog of Lyubich-Peters's result for certain classes of holomorphic mappings $f: \Delta^{n} \rightarrow \mathbb{C}^{n+1}$. An appropriate higher dimensional analog of the Lyubich-Peters's result should involve mappings that preserve the dimensions of analytic objects, so, we naturally look to work with finite holomorphic mappings. Let us formulate a higher dimensional analog of Lyubich-Peters's result in the following:

\begin{question} \label{qhd}(\textbf{Higher dimensional analog of the result by Lyubich-Peters})
    \\
    Let $f: \Delta^{n} \rightarrow \mathbb{C}^{n+1}$ be a finite holomorphic mapping with $f(0)= 0$ as a singular point of $f(\Delta^{n})$. Does there exist an $\epsilon > 0$ such that for any finite holomorphic $g: \Delta^{n} \rightarrow \mathbb{C}^{n+1}$ with $\| f -g \| < \epsilon$, the intersection $f(\Delta^{n}) \cap g(\Delta^{n})$ is non-empty?
\end{question}

Using Theorem \ref{main}, we shall obtain a partial answer to the above question. We seek to formulate a higher-dimensional analog of Theorem \ref{main} below. To describe the higher dimensional setting, we start with an $n$-dimensional irreducible subvariety $V$ of some open neighborhood $\Omega_{n+1} \subset \mathbb{C}^{n+1}$ of the origin with $0 \in V^{\times}$, a neighborhood $D^{n} \times \Delta^{'1} \subset \subset \Omega_{n+1}$ around $0$ good for $V$. Suppose $\pi_{n+1} (0) \in D^{n}$ is a smooth point of the discriminant variety $D_V$. Further, notice that, after possibly a small rotation, a small enough polydisk $D^{(n-1)}_1 \times D_2$ inside $D^{n}$ good for $D_V$ around $\pi_{n+1}(0)$ can be chosen so that $D_V \cap (D^{(n-1)}_1 \times D_2) $ is a connected complex submanifold. With this setup, we obtain the following higher dimensional analog of Theorem \ref{main}:

\begin{cor}\label{hd}
  Suppose $V$, $D_V$, and $D^{n} \times \Delta^{'1}$ are as above. Then there exists $\epsilon > 0$ such that for all $n$~dimensional subvariety $W$ of $\Omega_{n+1}$ with $D_W \cap (D^{(n-1)}_1 \times D_2)$ being a connected complex submanifold and $d_{H}(V, W) < \epsilon$, the intersection $V \cap W$ is non-empty.
\end{cor}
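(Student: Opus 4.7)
The plan is to reduce the corollary to Theorem~\ref{main} by slicing $V$ and $W$ with $2$-planes transverse to the smooth discriminant submanifold $D_V$. Using the decomposition $D^{(n-1)}_1 \times D_2 \subset D^n$, write coordinates on $\mathbb{C}^{n+1}$ as $(\zeta, w, t) \in \mathbb{C}^{n-1} \times \mathbb{C} \times \mathbb{C}'^1$. For each $\zeta \in D^{(n-1)}_1$, set $S_\zeta := \{\zeta\} \times D_2 \times \Delta'^1$ and consider the sliced curves $V_\zeta := V \cap S_\zeta$ and $W_\zeta := W \cap S_\zeta$. Since $D_V \cap (D^{(n-1)}_1 \times D_2)$ is a connected codimension-one complex submanifold projecting properly onto the simply connected base $D^{(n-1)}_1$, it is a graph $\{(\zeta, \alpha_V(\zeta))\}$; the analogous statement holds for $D_W$ under the Corollary's hypothesis whenever $d_H(V, W)$ is small enough that $D^n \times \Delta'^1$ is good for $W$ as well. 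Consequently $V_\zeta$ has exactly one discriminant point $\alpha_V(\zeta) \in D_2$ for the projection $S_\zeta \to D_2$, and $W_\zeta$ has at most one, which trivially satisfies the hypothesis of Theorem~\ref{main} that the perturbed curve have at most one non-normal crossing type discriminant point. Moreover $D_2 \times \Delta'^1$ is a good neighborhood of $V_\zeta$ inside $S_\zeta$, and for $\zeta$ close to $0$ the slice $V_\zeta$ is singular at a point above $\alpha_V(\zeta)$ because $V$ is genuinely branched over $D_V$ and $S_\zeta$ is transverse to $D_V$; after translating coordinates in $S_\zeta$ to place this singular point at the origin, $V_\zeta$ sits precisely in the setup of Theorem~\ref{main}.

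The key technical input is a slice-comparison estimate: for every $\epsilon' > 0$ there exists $\delta > 0$ such that $d_H(V, W) < \delta$ implies $d_H(V_\zeta, W_\zeta) < \epsilon'$ uniformly for $\zeta$ in a fixed compact neighborhood of $0 \in D^{(n-1)}_1$. Since both $V$ and $W$ project properly onto $D^n$ (the latter for small $d_H(V, W)$), each is fiberwise a finite set of points in $\Delta'^1$ above every base point, and these fibers depend continuously on the base. A fiberwise version of the proximity argument in Lemma~\ref{sep} forces $V \cap (\{z\} \times \Delta'^1)$ and $W \cap (\{z\} \times \Delta'^1)$ to be Hausdorff close in $\Delta'^1$ uniformly over $z \in D^n$; since $V_\zeta$ and $W_\zeta$ are the unions of these fibers as $z$ ranges over $\{\zeta\} \times D_2$, the desired slice-wise Hausdorff bound follows.

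To finish, I would fix $\zeta_0 \in D^{(n-1)}_1$ near $0$ with $V_{\zeta_0}$ genuinely singular, apply Theorem~\ref{main} to $V_{\zeta_0}$ and $W_{\zeta_0}$ to obtain an $\epsilon_0 > 0$ (depending only on $V_{\zeta_0}$ and the good neighborhood $D_2 \times \Delta'^1$) such that $d_H(V_{\zeta_0}, W_{\zeta_0}) < \epsilon_0$ forces $V_{\zeta_0} \cap W_{\zeta_0} \neq \emptyset$, and use the slice-comparison estimate to choose $\delta > 0$ so that $d_H(V, W) < \delta$ guarantees $d_H(V_{\zeta_0}, W_{\zeta_0}) < \epsilon_0$. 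Setting $\epsilon := \delta$ and using $V_{\zeta_0} \cap W_{\zeta_0} \subset V \cap W$ yields the corollary. The two main obstacles I anticipate are (i) the transversality verification that $V_\zeta$ is genuinely singular at an isolated point, which should follow from the smoothness of $D_V$ and a generic choice of slicing plane, and (ii) the uniform slice-comparison estimate, which requires a careful fiberwise proximity argument in the spirit of Lemmas~\ref{sep} and~\ref{discr0}; the second of these I expect to be the more delicate, since one must avoid losing control of Hausdorff distance when passing from $n$-dimensional subvarieties to their one-dimensional slices.
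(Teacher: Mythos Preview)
Your proposal is correct and follows essentially the same route as the paper: slice $V$ and $W$ by fixing the first $n-1$ coordinates, use the graph description of the smooth discriminant loci to ensure the sliced curves have a single discriminant point in $D_2$, and invoke Theorem~\ref{main} on the resulting pair of one-dimensional curves. The paper handles your obstacle~(i) by allowing a small rotation of $D_2$ to guarantee the slice of $V$ is genuinely singular, and it glosses over your obstacle~(ii) (the slice-comparison estimate) entirely; your write-up is in fact more careful on this point than the paper's.
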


\begin{proof}

Let $L_1 = D_V \cap (D^{(n-1)}_1 \times D_2)$ and $L'_1 = D_W \cap (D^{(n-1)}_1 \times D_2)$. Inside the polydisk $D^{(n-1)}_1 \times D_2$, we can describe the discriminant varieties $D_V$ and $D_W$ as multigraphs of holomorphic multifunctions from $D^{(n-1)}_1$ onto $D_2$. Since $L_1$ and $L'_1$ are both connected complex submanifolds inside $D^{(n-1)}_1 \times D_2$, by shrinking the good neighborhood if necessary, the inverse of the proper projection onto $D^{(n-1)}_1$ can be represented as a single sheeted holomorphic multifunction from $D^{(n-1)}_1$ to $D_2$.

 Now rotating the disk $D_2$ if necessary, we obtain a good direction $T_2$ and a good neighborhood $U_{n-1} \times T_2$ inside $D^{(n-1)}_1 \times D_2$ such that $(T_2 \times \Delta^{'1}) \cap V $ is a singular one-dimensional subvariety of $(T_2 \times \Delta^{'1})$. As $T_2$ is obtained by an arbitrarily small rotation of $D_2$, the discriminant varieties $(U_{n-1} \times T_2) \cap L_1$ and $(U_{n-1} \times T_2) \cap L'_1$ can again be represented as graphs of holomorphic functions from $U_{n-1}$ to $T_2$.

 Now for some $\alpha \in U_{n-1}$, we will have $(\{\alpha\} \times T_2) \cap L_1 = \pi_{n+1}(0)$. By the Lyubich-Peters result, there exists an $\epsilon > 0$ for the singular curve $L_1$ inside $(U_{n-1} \times T_2)$. As $U_{n-1} \times T_2$ is good for $L'_1$, the set $(\{\alpha\} \times T_2) \cap B_1$ is also singleton. This ensures that $(T_2 \times \Delta^{'1}) \cap W$ has one discriminant point inside $T_2$. Thus by Theorem \ref{main}, we have the result.
\end{proof}

As an immediate consequence of corollary \ref{hd}, we have the following result answering question \ref{qhd} partially. To describe the higher dimensional setting, we start with a finite holomorphic mapping $f: \Delta^{n} \rightarrow \mathbb{C}^{n+1}$ with $f(0)= 0$ as a singular point of $f(\Delta^{n})$, and a neighborhood $D^{n} \times \Delta^{'1}$ good for $f(\Delta^n)$ around $0$. Suppose $\pi_{n+1}:~f(\Delta^{n}) \cap (D^{n} \times \Delta^{'1}) \rightarrow D^{n}$ is the proper projection associated with the good neighborhood, $D_f$ is the discriminant variety associated with $\pi_{n+1}$, and $\pi_{n+1}(0)$ is a smooth point of $D_f$. Now $D^{n} \times \Delta^{'1}$ serves as a good neighborhood for any holomorphic image $g(\Delta^n)$ whenever the mapping $g:\Delta^n \rightarrow \mathbb{C}^{n+1}$ is sufficiently close to $f$, and thus the discriminant variety $D_g$ associated with the good neighborhood $D^{n} \times \Delta^{'1}$ can be obtained for each such finite holomorphic $g$. Further, notice that, after possibly a small rotation, a small enough polydisk $D^{(n-1)}_1 \times D_2$ \emph{good} for $D_f$ around $\pi_{n+1}(0)$ can be chosen so that $D_f \cap (D^{(n-1)}_1 \times D_2)$ is a connected complex submanifold. We obtain the following special case of the higher dimensional analog of Lyubich-Peters' result:

\begin{corollaryB}\label{hd1}
    Let $f$, $D_f$ and $D^{(n-1)}_1 \times D_2$ are as above. Then there exists $\epsilon > 0$ such that for all finite holomorphic $g: \Delta^{n} \rightarrow \mathbb{C}^{n+1}$ with $D_g \cap (D^{(n-1)}_1 \times D_2)$ being a connected complex submanifold and $\| f -g \| < \epsilon$, the intersection $f(\Delta^{n}) \cap g(\Delta^{n})$ is non-empty.
\end{corollaryB}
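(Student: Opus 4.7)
The plan is to deduce Corollary B directly from Corollary \ref{hd} by setting $V := f(\Delta^n)$ and $W := g(\Delta^n)$, with the only new input being the translation from the uniform distance $\|f-g\|$ between mappings to the Hausdorff distance between their images. Since $f$ is finite holomorphic with $f(0)=0$ a singular point of $f(\Delta^n)$, Remmert's proper mapping theorem guarantees (after restricting to the good neighborhood $D^n \times \Delta^{'1}$) that $V = f(\Delta^n)$ is an $n$-dimensional irreducible analytic subvariety of an ambient open neighborhood $\Omega_{n+1} \subset \mathbb{C}^{n+1}$. The hypotheses placed on $f$, $D_f$, and $D^{(n-1)}_1 \times D_2$ in Corollary B are then exactly the hypotheses on $V$, $D_V$, and $D^{(n-1)}_1 \times D_2$ demanded by Corollary \ref{hd}. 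Applying Corollary \ref{hd} to $V$ produces an $\epsilon_0 > 0$ such that every $n$-dimensional subvariety $W \subset \Omega_{n+1}$ satisfying $d_H(V,W) < \epsilon_0$ and such that $D_W \cap (D^{(n-1)}_1 \times D_2)$ is a connected complex submanifold must intersect $V$.

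Next I would set $\epsilon := \epsilon_0$ (shrinking it slightly if required to ensure $g(\Delta^n) \subset \Omega_{n+1}$ for $\|f-g\|<\epsilon$) and, given $g$ as in the statement, put $W := g(\Delta^n)$. Because $g$ is finite holomorphic, $W$ is again an $n$-dimensional analytic subvariety of $\Omega_{n+1}$. The key and essentially only calculation is the inequality
\[
  d_H(V, W) \;\leq\; \|f-g\|_{\infty},
\]
which holds because for each $z \in \Delta^n$ the point $g(z) \in W$ lies within $\|f-g\|$ of $f(z) \in V$, and symmetrically. Combined with $\|f-g\| < \epsilon$ this gives $d_H(V,W) < \epsilon_0$, while the standing hypothesis that $D_g \cap (D^{(n-1)}_1 \times D_2) = D_W \cap (D^{(n-1)}_1 \times D_2)$ is a connected complex submanifold is precisely the discriminant hypothesis required of $W$. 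Corollary \ref{hd} then yields $f(\Delta^n) \cap g(\Delta^n) = V \cap W \neq \emptyset$, which is the conclusion.

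The main (and really only) obstacle is bookkeeping rather than substance: one must verify that for $g$ sufficiently close to $f$ in the uniform norm the polydisk $D^n \times \Delta^{'1}$ remains good for $g(\Delta^n)$, so that the discriminant $D_g$ appearing in the hypothesis genuinely coincides with the discriminant of the proper projection $\pi_{n+1}|_{W}$ used inside Corollary \ref{hd}. This is a standard continuity-of-roots observation, in the same spirit as Lemma \ref{discr0}: the condition $W \cap (D^n \times \partial \Delta^{'1}) = \emptyset$ is open in the sup-norm topology, and the number of fibered preimages and the associated discriminant set depend continuously on small perturbations of the defining mapping. Once these routine verifications are in place, the statement reduces to Corollary \ref{hd} via the Hausdorff-versus-sup-norm comparison above.
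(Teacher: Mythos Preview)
Your proposal is correct and follows essentially the same approach as the paper: reduce to Corollary~\ref{hd} by observing that $\|f-g\|<\epsilon$ forces $d_H(f(\Delta^n),g(\Delta^n))<\epsilon$, then apply Corollary~\ref{hd} directly. The paper's own proof is a one-line invocation of this same Hausdorff-versus-sup-norm comparison, so your version is in fact more detailed in spelling out the bookkeeping (Remmert's theorem, preservation of the good neighborhood, identification of $D_g$ with $D_W$) that the paper leaves implicit.
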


\begin{proof}
    Since $\| f - g\| < \epsilon$ forces the Hausdorff distance $d_{H}(f(\Delta^{n}), g(\Delta^{n}))$ of $f(\Delta^{n})$ and $g(\Delta^{n})$ to be smaller than $\epsilon$, the above is obtained from a direct application of Corollary \ref{hd}. 
\end{proof}

\section{Examples}

In this section, we present four examples to illustrate various points. We explicitly prove the Theorem \ref{main} for two particular varieties in the Example $5.1$. Example $5.2$ shows that the sufficient condition on Theorem \ref{main} is not a \emph{necessary} \emph{condition}. In Example $5.3$ we illustrate a case where the sufficient condition is required for the intersection to be non-empty. We explicitly prove the Corollary \ref{hd1} for particular holomorphic functions in the last example. 

\begin{example}

Consider the analytic varieties $$V:= \{ (z,w) \in \mathbb{C}^{2}: z^{2} - w^{3} = 0 \},$$  $$V_{\epsilon}:= \{ (z,w) \in \mathbb{C}^{2} : (z - \epsilon_{1})^{2} - (w - \epsilon_{2})^{3} = 0 \} ,$$ where $\epsilon_j \neq  0$ for $j= 1, 2$. Note that $V_{\epsilon}$ is just $V$ shifted by $\epsilon = (\epsilon_1, \epsilon_2)$.

We can describe both $V$ and $V_{\epsilon}$ as the multigraph of the holomorphic multifunctions $F, F_{\epsilon}: D \longrightarrow (\mathbb{C}^{'1})^{3}_{sym}$ defined as $$\alpha \mapsto \left \langle \alpha^{\frac{2}{3}}, \alpha^{\frac{2}{3}} \omega, \alpha^{\frac{2}{3}} \omega^{2} \right \rangle, $$ $$\alpha \mapsto \left \langle (\alpha - \epsilon_1)^{\frac{2}{3}} + \epsilon_2, \omega (\alpha - \epsilon_1)^{\frac{2}{3}} + \epsilon_2, \omega^{2} (\alpha - \epsilon_1)^{\frac{2}{3}} + \epsilon_2 \right \rangle, $$ where $\omega$ is the cube-root of unity. 

The discriminant sets $D(V)$ and $D(V_{\epsilon})$ of $V$ and $V_{\epsilon}$ are $\{0\}$ and $\{ \epsilon_1 \}$ respectively. But $V$ has a local parametrization $t \mapsto (t^{3}, t^{2})$. The function $f(t)= (t^{3} - \epsilon_{1})^{2} - (t^{2} - \epsilon_{2})^{3}$ is a degree four polynomial on $t$. 

Since $f(t)= (t^{3} - \epsilon_{1})^{2} - (t^{2} - \epsilon_{2})^{3} = \epsilon^2_1 + \epsilon^3_2 -2 t^3 \epsilon_1 + 3t^4 \epsilon_2 - 3 t^2 \epsilon^2_2$, and the function $-2 t^3 \epsilon_1 + 3t^4 \epsilon_2 - 3 t^2 \epsilon^2_2$ has a zero at $t=0$, we can prove $f(t)$ has a zero using the Rouche's theorem. 

In the case when $|\frac{\epsilon_2}{\epsilon_1}| \geq 1$ we have: $$ |\epsilon_2| \left | 3 t^4 - 3t^2 \epsilon_2 - 2t^3 \frac{\epsilon_1}{\epsilon_2} \right | \geq |\epsilon_2| \left ( 3 - 3 |\epsilon_2| - 2 |\frac{\epsilon_1}{\epsilon_2}| \right ),$$ on the boundary of the unit disk $t \in \delta D$. For $\| \epsilon\|$ sufficiently small, the term $3 - 3 |\epsilon_2| - 2 |\frac{\epsilon_1}{\epsilon_2}|$ is close to $1$ or is bigger than $\frac{1}{2}$, and the term $\epsilon^2_2 + \frac{\epsilon^2_1}{\epsilon_2}$ is close to $0$, providing us the following strict inequality on the boundary $\partial D$, $$ |\epsilon_2| \left ( 3 - 3 |\epsilon_2| - 2 |\frac{\epsilon_1}{\epsilon_2}| \right ) > |\epsilon_2| \left |\epsilon^2_2 + \frac{\epsilon^2_1}{\epsilon_2} \right |.$$ Again Rouche's theorem forces $f(t)$ to have zeros for sufficiently small $\| \epsilon\|$.

In the case when $|\frac{\epsilon_1}{\epsilon_2}| > 1$ we have: $$ |\epsilon_2| \left | 2t^3 \frac{\epsilon_1}{\epsilon_2} + 3t^2 \epsilon_2 - 3t^4 \right | \geq |t|^2 |\epsilon_2| \left (2 |t| |\frac{\epsilon_1}{\epsilon_2}| - 3 |\epsilon_2| - 3|t|^2 \right ).$$ There exists $0 < \delta < 1$ small enough such that on the circle $|t| = \delta$, the term $2 |t \frac{\epsilon_1}{\epsilon_2}|~-~3|\epsilon_2|-~3|t|^2$ is always positive for sufficiently small $\| \epsilon \|$. Clearly for small $\|\epsilon \|$, the term $2 |t| |\frac{\epsilon_1}{\epsilon_2}| - 3 |\epsilon_2| - 3|t|^2$ is close to some positive number bigger or equal to $2 \delta - 3 \delta^2 > 0$ on the circle $|t| = \delta$, and $\epsilon^2_2 + \frac{\epsilon^2_1}{\epsilon_2}$ is close to $0$. Thus on the circle $|t| = \delta$, we obtain $$ |t|^2 |\epsilon_2| \left (2 |t| |\frac{\epsilon_1}{\epsilon_2}| - 3 |\epsilon_2| - 3|t|^2 \right ) > |\epsilon_2| \left |\epsilon^2_2 + \frac{\epsilon^2_1}{\epsilon_2} \right |.$$ Thus Rouche's theorem implies $f(t)$ admits zeros inside $|t| < \delta$.

Thus $V \cap V_{\epsilon}$ is non-empty whenever $\epsilon_1, \epsilon_2$ is sufficiently small.

\end{example}

\begin{example}

Consider the analytic varieties $$V:= \{ (z,w) \in \mathbb{C}^{2}: z^{2} - w^{3} = 0 \},$$  $$V_{\epsilon}:= \{ (z,w) \in \mathbb{C}^{2} : (z - \epsilon_{1})^{2} - (w - \epsilon_{2})^{3} = \epsilon^{2}_1 \epsilon^{3}_2 \} ,$$ where $\epsilon_j \neq  0$ for $j=1,  2$. Let, $\epsilon_3 = \epsilon^{2}_1 \epsilon^{3}_2$.

$V_{\epsilon}$ is the multigraph of the holomorphic multifunction $F_{\epsilon}: D \longrightarrow (\mathbb{C}^{'1})^{3}_{sym}$ defined as $$\alpha \mapsto \left \langle ((\alpha - \epsilon_1)^{2} - \epsilon_{3})^{\frac{1}{3}} + \epsilon_2, \omega ((\alpha - \epsilon_1)^{2} - \epsilon_{3})^{\frac{1}{3}} + \epsilon_2, \omega^{2} ((\alpha - \epsilon_1)^{2} - \epsilon_{3})^{\frac{1}{3}} + \epsilon_2  \right \rangle. $$ 

The discriminant set $D(V_{\epsilon})$ of $V_{\epsilon}$ is $\{ \epsilon_1 - \sqrt{\epsilon_3}, \epsilon_1 + \sqrt{\epsilon_3} \}$.

But $V$ has a local parametrization $t \mapsto (t^{3}, t^{2})$. The equation $(t^{3} - \epsilon_{1})^{2} - (t^{2} - \epsilon_{2})^{3} = \epsilon_{3}$ is a degree four polynomial on $t$. Using the techniques of the previous example we can study separate cases of $\epsilon_j$'s and apply Rouche's theorem to establish that $V \cap V_{\epsilon}$ is non-empty in each case. Thus although $V_{\epsilon}$ does not meet the requirement of the main result, $V \cap V_{\epsilon}$ is non-empty, showing that the condition in the main result is not a necessary condition.
\end{example}

\begin{example}
   
Consider the analytic varieties $$V:= \{ (z,w): z^{2}-w^{3}=0 \},$$ $$V_{\epsilon}:= \{ (z,w): z^{2}-w^{3}= \epsilon \},$$ where $\epsilon \neq 0$. 

We can describe $V$ as the multigraph of the holomorphic multifunction $F: D \rightarrow~(\mathbb{C}^{'1})^{3}_{sym}$ defined similar to Example $5.1$. Similarly $V_{\epsilon}$ can be expressed as the multigraph of $F_{\epsilon}$, defined as $$\alpha \mapsto \left \langle (\alpha^{2} - \epsilon)^{\frac{1}{3}}, (\alpha^{2} - \epsilon)^{\frac{1}{3}} \omega, (\alpha^{2} - \epsilon)^{\frac{1}{3}} \omega^{2} \right \rangle, $$ where $\omega$ denotes the cube root of unity. The discriminant set corresponding to the multigraph of $F$ is $\{0 \} \subset D$ and for the multigraph of $F_{\epsilon}$ the discriminant set is $\{ \sqrt{\epsilon}, - \sqrt{\epsilon} \}$. Thus for any $\epsilon > 0$ the discriminant set corresponding to multigraph of $F_{\epsilon}$ always has two points and meanwhile, $V \cap V_{\epsilon} = \emptyset$, showing that our hypothesis on the discriminant set was needed.
\end{example}

\begin{example}(\textbf{An example of the higher dimensional analog result})

Consider the mapping from the unit polydisk in $\mathbb{C}^{2}$ onto a neighborhood of the origin in $\mathbb{C}^{3}$ given by, $$ f: (z,w) \mapsto (z^{2}, w^{3} + z^{3}, w). $$ The image of $f$ can be shown to lie inside the hypervariety $V$ in $\mathbb{C}^{3}$ containing the origin as a singular point which is given by the following equation, $$ a^{3}= (b - c^3)^{2},$$ where $(a,b,c)$ are the coordinates in $\mathbb{C}^{3}$. As $V$ is irreducible, the image of $f$ coincides with it. Since the projection $\pi_2$ to the last two coordinates is proper, we can describe $V$ locally near $0 \in \mathbb{C}^{3}$ via the multifunction $$ \pi^{-1}_2 : (b,c) \mapsto \left \langle ( (b - c^{3})^{\frac{2}{3}}, b, c), ( \omega (b - c^{3})^{\frac{2}{3}}, b, c), ( \omega^{2} (b - c^{3})^{\frac{2}{3}}, b, c) \right \rangle, $$ where $\omega$ stands for the cube root of unity. So the discriminant variety is given by $D_V := \{(b,c): (b - c^{3})^{2} = 0\}$.

Note that as $V$ is an irreducible hypervariety, the singular set of $V$ can be written as $\{ (a,b,c) \in V: 3a^{2} = 2(b -c^{3}) = -6c^{2}(b - c^{3}) = 0\}$. In particular $\{(0, \delta^{3}, \delta): 0 < | \delta | < 1 \}$ is a subset of the singular set of $V$.

Consider the following perturbation of $f$ from the unit polydisk $D^{2}$ in $\mathbb{C}^{2}$ onto a neighborhood of the origin in $\mathbb{C}^{3}$ given by $$ g: (z,w) \mapsto ( z^2 + \epsilon_1 w^4, w^{3} + z^{3} + \epsilon_{2} w^{2}, w + \epsilon_3 ),$$ where $\epsilon_j \neq 0$ are small for $j= 1,2,3$. The image of $g$ lies inside a hypervariety $W$ of $\mathbb{C}^{3}$ which is given by the following equation, $$ (a - \epsilon_1(c - \epsilon_3)^{4})^{3} = (b - (c - \epsilon_{3})^{3} - \epsilon_2 (c - \epsilon_3)^{2} )^{2}.$$ Again as the projection $\pi_2$ in the last two coordinates is proper, the following holomorphic multifunction provides a local description of $W$ in a neighborhood of the origin, $$ (b,c) \mapsto \left \langle ( B^{\frac{2}{3}} + \epsilon_1(c - \epsilon_3)^{4}, b, c), (\omega B^{\frac{2}{3}} + \epsilon_1(c - \epsilon_3)^{4}, b, c), (\omega^{2} B^{\frac{2}{3}} + \epsilon_1(c - \epsilon_3)^{4}, b, c) \right \rangle, $$ where $B = (b - (c - \epsilon_{3})^{3} - \epsilon_2 (c - \epsilon_3)^{2} )$, and $\omega$ stands for the cube root of unity. So the discriminant variety is given by $D_W := \{(b,c): B^{2} = 0\}$.

Now both $D_V$ and $D_W$ can be written as holomorphic multifunctions $$b \mapsto \langle (b, (b)^{\frac{1}{3}}): 2, (b, \omega (b)^{\frac{1}{3}}): 2, (b, \omega^{2} (b)^{\frac{1}{3}}):2 \rangle ,$$ $$ b \mapsto \langle (b, c_1(b)): 2, (b, c_2(b)): 2, (b, c_3(b)): 2 \rangle,$$ where $c_j(b)$ are the zeros of $b - (c - \epsilon_{3})^{3} - \epsilon_2 (c - \epsilon_3)^{2}$. Note that for appropriate non-zero values of $b$, both $D_V$ and $D_W$ generically have three distinct branches, each of multiplicity~$2$.

For a nonzero $\delta$, the singular point $(0, \delta^{3}, \delta)$ of $V$ projects down to a non-singular point of $D_V$. By making a careful choice of $\delta$, we can obtain that for sufficiently small $\epsilon_j$s, there is exactly one multiplicity $2$ branch of $D_W$ in a small neighborhood of $( \delta^{3}, \delta)$. Thus applying Corollary \ref{hd}, we can say $f(D^{2}) \cap g(D^{2}) \neq \emptyset $.

On the other hand, to directly see that the intersection is non-empty, we plug in the finite mapping $g$ in the equation of $V$. We obtain the following expression, $$ p(z,w) = (z^{2} + \epsilon_1 w^{4})^{3} - (w^3 + z^3 + \epsilon_2 w^2 - (w+ \epsilon_3)^3)^{2}.$$  Now if there exists $(z, w) \in D^{2}$ for which $g$ satisfies the equation of $V$, i.e. $p(z,w)$ is zero then the point $g(z, w)$ must be lying inside $V \cap W$. Setting $w = 0$, and $z = \frac{\epsilon_3}{(2)^{\frac{1}{3}}}$, we obtain a zero of $p(z, w)$. So $V \cap W = f(D^2) \cap g(D^2)$ is non-empty.

\end{example}

%printbibliography
 
%\newpage

\end{document}